\newcommand{\R}{\Bbb{R}}
\newcommand{\N}{\Bbb{N}}
\newcommand{\vol}{\mathrm{vol}}
\newcommand{\inte}{\mathrm{int}}
\newcommand{\cl}{\mathrm{cl}}
\newcommand{\dist}{\mathrm{dist}}
\newcommand{\K}{\mathbf{K}}
\newcommand{\Sw}{\mathcal{S}_w}
\newcommand{\bx}{\boldsymbol x}
\newcommand{\by}{\boldsymbol y}
\newcommand{\be}{\boldsymbol\epsilon}
\newcommand{\bu}{\boldsymbol u}
\newcommand{\bb}{\mathbf{B}}
\newcommand{\tpsi}{\tilde{\psi}}
\newtheorem {theorem}{Theorem}[section]
\newtheorem {corollary}{Corollary}[section]
\newtheorem {proposition}{Proposition}[section]
\newtheorem {example}{Example}[section]
\newtheorem {definition}{Definition}[section]
\newtheorem {remark}{Remark}[section]
\newtheorem {assumption}{Assumption}[section]
\def\ar{a\kern-.370em\raise.16ex\hbox{\char95\kern-0.53ex\char'47}\kern.05em}
\def\ees{{\accent"5E e}\kern-.385em\raise.2ex\hbox{\char'23}\kern-.08em}
\def\eex{{\accent"5E e}\kern-.470em\raise.3ex\hbox{\char'176}}
\def\AR{A\kern-.46em\raise.80ex\hbox{\char95\kern-0.53ex\char'47}\kern.13em}
\def\EES{{\accent"5E E}\kern-.5em\raise.8ex\hbox{\char'23 }}
\def\EEX{{\accent"5E E}\kern-.60em\raise.9ex\hbox{\char'176}\kern.1em}
\def\ow{o\kern-.42em\raise.82ex\hbox{
  \vrule width .12em height .0ex depth .075ex \kern-0.16em \char'56}\kern-.07em}
\def\OW{O\kern-.460em\raise1.36ex\hbox{
\vrule width .13em height .0ex depth .075ex \kern-0.16em \char'56}\kern-.07em}
\def\UW{U\kern-.42em\raise1.36ex\hbox{
\vrule width .13em height .0ex depth .075ex \kern-0.16em \char'56}\kern-.07em}
\def\DD{D\kern-.7em\raise0.4ex\hbox{\char '55}\kern.33em}
\title{A new scheme for approximating the weakly efficient solution
set of vector rational optimization problems}
\author{Feng Guo}
\address[Feng Guo]{School of Mathematical Sciences, Dalian University of Technology, Dalian 116024, Liaoning Province, China.}
\email{fguo@dlut.edu.cn}
\author{Liguo Jiao$^{*}$}
\address[Liguo Jiao]{Academy for Advanced Interdisciplinary Studies, 
Northeast Normal University, Changchun 130024, Jilin Province, China.}
\email{hanchezi@163.com; jiaolg356@nenu.edu.cn}
\thanks{2020 Mathematics Subject Classification. 90C29, 90C32, 90C23, 90C22}
\thanks{$^{*}$Corresponding Author}
\keywords{Vector optimization, polynomial optimization, achievement function, Lasserre's hierarchy, weakly efficient solution set.}
\date{ \today}
\begin{document}
\maketitle

\begin{abstract}
In this paper, we provide a new scheme for approximating the weakly
efficient solution set for  
a class of vector optimization problems with rational objectives over
a feasible set defined by finitely many polynomial inequalities.
More precisely, we present a procedure to obtain a sequence of
explicit approximations of the weakly efficient solution set of the 
problem in question. Each
approximation is the intersection of the sublevel set of a single
polynomial and the feasible set. To this end, we make use of the
achievement function associated with the considered problem and
construct polynomial approximations of it over the feasible set from
above. Remarkably,
the construction can be converted to semidefinite programming
problems.
Several nontrivial examples are designed to illustrate the proposed new scheme.
\end{abstract}

\section{Introduction}\label{sec::1}

Vector optimization forms an important field of research in
optimization theory; see, e.g.,
\cite{Chankong1983,Ehrgott2005,Eschenauer1990,Luc2016,Sawaragi1985}, and many practical
applications in various areas, 
such as engineering \cite{Eschenauer1990}, humanitarian aid \cite{Gutjahr2016}, 
medical health \cite{Chen2012} and so on.
In this paper, we will be concerned with the following constrained
vector rational optimization problem of the form
\begin{align}\label{VP}
{\rm Min}_{\mathbb{R}^m_+} \; \left\{f(\bx):=\left(\frac{p_1(\bx)}{q_1(\bx)},\ldots, \frac{p_m(\bx)}{q_m(\bx)}\right): \bx \in \Omega\right\},  \tag{VROP}
\end{align}
where ``${\rm Min}_{\Bbb{R}^m_+}$" is understood with respect to the ordering non-negative 
orthant ${\Bbb{R}^m_+},$ $f: \mathbb{R}^n \rightarrow \mathbb{R}^m$ is a rational mapping 
with $f_i = \frac{p_i}{q_i},$ in which $p_i$ and $q_i$ are real polynomials in the variable $\bx = (x_1, \ldots, x_n)$ for each $i =1, \ldots, m,$ and the feasible set $\Omega$ is given by
\begin{align*}
\Omega:=\{ \bx \in \mathbb{R}^n \colon g_j(\bx) \geq 0,\ j = 1, \ldots, r\}, 
\end{align*}
where for each $j = 1, \ldots, r,$ $g_j$ is a real polynomial in the variable $\bx.$ 
By letting $q_i = 1$ for all $i = 1, \ldots, m$, our model then covers
vector polynomial optimization problems
\cite{Blanco2014,Kim2019,LeeJiao2021,Luc2016,Nie2021}, and by letting 
$p_i, q_i$ be linear functions for all $i = 1, \ldots, m$, our model also covers
linear fractional vector optimization problems \cite{Huong2020} as well.

For vector optimization, it is almost impossible to find a single
point simultaneously minimizing all the objective functions. 
Therefore, we usually look for some ``best preferred" solutions in
vector optimization. 
Now, let us recall the concepts of optimal solutions to vector
optimization problems.  A point $\bx \in \Omega$ is said to be 
an {\it efficient solution} (or Edgeworth–Pareto (EP) optimal point) to the
problem~\eqref{VP} if it holds that
\[ 
f(\by) - f(\bx) \not\in - \mathbb{R}^m_+\setminus\{0\}\quad
\textrm{for all} \quad \by \in \Omega;
\]
and a {\it weakly efficient solution} (or weakly EP optimal point) to
the problem~\eqref{VP} if it holds that 
\[
	f(\by) - f(\bx) \not\in -\mathbb{R}^m_{++} \quad \textrm{for all}
	\quad \by \in \Omega,
\]
where $\mathbb{R}^m_{++}$ denotes the positive orthant of
$\mathbb{R}^m$.
Let $\be\in\mathbb{R}_+^m$ be given, a point $\bx \in \Omega$ is said to be 
a {\it weakly $\be$-efficient solution} to the problem~\eqref{VP} if it holds that 
\[
	f(\by) - f(\bx) + \be \not\in -\mathbb{R}^m_{++} \quad \textrm{for
	all} \quad \by \in \Omega.
\]
Denote by $\mathcal{S}$ (resp., $\mathcal{S}_w$, $\Sw^{\be}$) the set of all efficient
(resp, weakly efficient, weakly $\be$-effcient) solutions to the problem~\eqref{VP}, respectively.
Clearly, $\mathcal{S} \subset \mathcal{S}_w \subset \Sw^{\be}$, but not conversely. 
We call the image $f(\Sw)$ the Pareto frontier (the Pareto curve if
$m=2$) of \eqref{VP}; see \cite{Magron2014}.

Throughout this paper, we make the following blanket assumptions on
\eqref{VP}:

\vskip 3pt
\begin{enumerate}[(\bf {A}1)]
	\item The feasible set $\Omega$ is nonempty and compact; 
	\item The denominators $q_i(\bx) >0$ over $\Omega$ for all $i = 1, \ldots, m.$ 
\end{enumerate}
As each $f_i$ is continuous, ({\bf A1}) implies that the image $f(\Omega)$ of the rational
mapping $f$ over $\Omega$ is also compact,
which ensures the existence of (weakly)
efficient solutions to the problem~\eqref{VP}; see, e.g.,
\cite[Theorem 1]{Borwein1983}, \cite[Theorem 2.1]{Ehrgott2005} and
\cite[Corollary 3.2.1]{Sawaragi1985}.
The problem~\eqref{VP} is well defined under ({\bf A2}), which is commonly adopted
	in the literature when studying fractional programming. Moreover,
	by substituting $\frac{p_iq_i}{q_i^2}$ for $\frac{p_i}{q_i}$, 
	{(\bf A2)} can be weakened as $q_i(\bx)\neq 0$
over $\Omega$ for all $i = 1, \ldots, m.$

Motivated by its extensive applications, a great deal of attention has
been attracted to the development of algorithms for computing (weakly)
efficient solutions  
to vector optimization; see
\cite{Burachik2017,Chuong2020,Fliege2009,LeeJiao2018,LeeJiao2021,Nie2021,Tanabe2019,
Tanabe2020,Tanabe2021} and references therein. 
Among them, there are mainly two different approaches for solving vector optimization, 
by which we mean finding its (weakly) efficient solutions. 
One is based on the scalarization methods (e.g., \cite{Burachik2017,Chuong2020,LeeJiao2018,LeeJiao2021,Nie2021}), 
which computes (weakly) efficient solutions by choosing some parameters
in advance and reformulating them as one or several single objective
optimization problems.
The other is based on descent methods;
see 
e.g., \cite{Fliege2009} for Newton's methods,
\cite{Tanabe2019,Tanabe2020,Tanabe2021,ZhaoYao2021,ZhaoYao2022} for (projected) gradient
methods.

We would like to emphasize that the aforementioned methods can only
find one or some particular (weakly) efficient solutions, rather than
giving information about the whole set of (weakly) efficient
solutions, which is apparently important for applications of vector
optimziation in the real world. 
Instead, the aim and novelty of this
paper is to provide a new scheme for approximating the whole set of
weakly ($\be$-)efficient solutions of \eqref{VP}.
More precisely, we
provide a procedure to obtain a sequence of explicit approximations of
$\Sw^{\be}$ (and hence $\Sw$ by letting $\be\to 0$). 
Each approximation is the intersection of the sublevel set of a
single polynomial and the feasible set $\Omega$.
As far as we know, there are few methods of this type for solving
vector optimization problems in the literature.

To this end, we make use of the {\it achievement function}
(c.f. \cite{Ehrgott2005,Pham2018,Wierzbicki1986})
associated with the problem~\eqref{VP} which is defined as
\[
\psi(\bx) := \sup_{\by \in {\Omega}} \min_{i = 1, \ldots, m} [f_i(\bx) - f_i(\by)].
\]
It can be shown that the sets $\Sw$ and $\Sw^{\be}$ can be written as
the intersection of sublevel sets of $\psi(\bx)$ and the feasible set
$\Omega$ (see Section \ref{sec::3}). As the function $\psi(\bx)$ can
be fairly complicated, the problem is reduced
to construct polynomial approximations of $\psi(\bx)$. By rewriting the
definition of $\psi(\bx)$ as a parametric polynomial optimization
problem, we can contruct a sequence of polynomial approximations
$\{\psi_k(\bx)\}_{k\in\mathbb{N}}$ of $\psi(\bx)$ over the feasible
set $\Omega$ from above by
invoking the ``joint+marginal'' approach developed by Lasserre in
\cite{Lasserre2009,Lasserre2015}. Remarkably,
the construction of $\{\psi_k(\bx)\}_{k\in\mathbb{N}}$ can be
converted to semidefinite programming (SDP) problems. For
$\be\in\mathbb{R}_+^m$ of the form $\be=(\delta,\ldots,\delta)$ with
$\delta>0$, the intersection, denoted by $\mathcal{A}(\delta, k)$, of
the sublevel set $\psi_k(\bx)\le\delta$ and the feasible set $\Omega$ are inner
approximations of $\Sw^{\be}$. Under some conditions, we prove that
$\vol\left(\Sw^{\be}\setminus\mathcal{A}(\delta,
k)\right)\to 0$ as $k\to\infty$, where ``$\vol(\cdot)$'' denotes the
Lebesgue volume (see Theorem \ref{th::main}). Since it holds 
for $\be=(\delta,\ldots,\delta)$ that
$\Sw^{\be}\to\Sw$ as $\delta\to
0$ (see Proposition \ref{prop::dist}), we may take $\mathcal{A}(\delta,
k)$ as an approximation of $\Sw$ with sufficiently small $\delta>0$
and sufficiently large $k\in\mathbb{N}$ (see Corollary
\ref{cor::convergence} and Remark \ref{rk::convergence}).

The rest of this paper is organized as follows.
Section~\ref{sec:preliminaries} contains some preliminaries on
polynomial optimization. In Section~\ref{sec::3}, we study the
characterization of the weakly efficient solution set of the problem
\eqref{VP} by the associated achievement function $\psi(\bx)$. In
Section~\ref{sec::4}, we show how to approximate the weakly
($\be$-)efficient solution set
of the problem~\eqref{VP}, and present some nontrivial illustrating
examples. Concusions are given in Section~\ref{sec::5}.

\section{Preliminaries}\label{sec:preliminaries}

In this section, we collect some notation and preliminary results
which will be used in this paper.
The symbol $\mathbb{N}$ (resp., $\mathbb{R}$, $\mathbb{R}_+$,
$\mathbb{R}_{++}$) denotes
the set of nonnegative integers (resp., real numbers, nonnegative real
numbers, positive real numbers).
For a set $D$ in $\mathbb{R}^n,$ we use ${\rm cl}(D)$ and ${\rm
int}(D)$ to denote the closure and interior of $D,$ respectively.
Denote by $\bb$ the closed unit ball in $\mathbb{R}^n$ centered at the
origin. For
a point $\bu\in\mathbb{R}^n$, $\dist(\bu, D)$ denotes the Euclidean
distance between $\bu$ and $D$. 
For $\bu\in \mathbb{R}^n$,
$\Vert \bu\Vert$ denotes the standard Euclidean norm of $\bu$.
For $\alpha:=(\alpha_1,\ldots,\alpha_n)\in\N^n$,
$|\alpha|=\alpha_1+\cdots+\alpha_n$.
For $k\in\N$, denote by $\N^n_k=\{\alpha\in\N^n \colon |\alpha|\le
k\}$ and $\vert\N^n_k\vert$ its cardinality.
Denote by $\mathbb{R}[\bx]$ the ring of polynomials in $\bx:=(x_1,
\ldots, x_n)$ with real coefficients and by $\mathbb{R}[\bx]_k$ 
the set of polynomials in $\mathbb{R}[\bx]$ of degree up to $k$.
For a polynomial $f,$ we use $\deg(f)$ to denote the total degree of $f.$
For $\alpha \in \mathbb{N}^n,$ the
notation $\bx^\alpha$ stands for the monomial $x_1^{\alpha_1}\cdots
x_n^{\alpha_n}.$ 

Now we recall some background about the {\it sum of squares}
representations of nonnegative (positive) polynomials over a set
defined by finitely many polynomial inequalities. We say that a
polynomial
$h\in\mathbb{R}[\bx]$ is sum of squares
of polynomials if there exist polynomials $h_j,$ $j = 1,\ldots,s,$
such that $h =\sum_{j=1}^{s}h_j^2.$
The set consisting of all sum of squares polynomial in $\bx$ is
denoted by $\Sigma^2[\bx].$
Let $\{h_1, \ldots, h_s\} \subset \mathbb{R}[\bx]$ be a finite set of
polynomials and 
\[
	S:=\{\bx\in\mathbb{R}^n \colon h_j(\bx)\ge 0,\ j=1,\ldots,s\}.
\]
\begin{assumption}\label{Archimedean}{\rm
		There exists some $N\in\mathbb{R}$ such that 
		\begin{eqnarray*}
	N - \sum_{i=1}^n x_i^2 =  \sigma_0(\bx)+\sum_{j = 1}^{s} \sigma_j(\bx) h_j(\bx),
\end{eqnarray*}
for some sum of squares polynomials $\sigma_j \in \Sigma^2[\bx], j=0,
1, \ldots, s.$
}\end{assumption}

\begin{theorem}[Putinar's
	Positivstellensatz {\upshape \cite{Putinar1993}}]\label{th::putinar}
	Suppose that Assumption \ref{Archimedean} holds. If
	$h(\bx)\in\mathbb{R}[\bx]$ is positive on $S,$ then $h(\bx)$ can
	be written in the form
	\begin{equation}\label{eq::pp}
h(\bx)=\sigma_0 (\bx) +\sum_{j = 1}^{s} \sigma_j h_j(\bx),
\end{equation}
for some sum of squares polynomials $\sigma_j \in \Sigma^2[\bx], j=0,
1, \ldots, s.$
\end{theorem}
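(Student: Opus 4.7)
The plan is to establish the representation \eqref{eq::pp} by a Hahn--Banach duality argument that reduces the polynomial-representation problem to a moment problem on $S$. Let $M := \Sigma^2[\bx] + \sum_{j=1}^s \Sigma^2[\bx]\cdot h_j$ denote the quadratic module generated by $h_1,\ldots,h_s$, so that the desired conclusion is precisely $h\in M$. Assumption~\ref{Archimedean} says $N-\sum_i x_i^2\in M$, and from this one quickly shows that $M$ is \emph{Archimedean}: for every $p\in\mathbb{R}[\bx]$ there exists $c>0$ with $c-p\in M$ (use that powers of $\bx^\alpha$ are bounded once $\sum x_i^2$ is, together with the identity $2p = (1+p)^2/2 - (1-p)^2/2 + \ldots$ combined with the bound on $\sum x_i^2$).

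Next, suppose toward contradiction that $h\notin M$. Since $M$ is a convex cone in the real vector space $\mathbb{R}[\bx]$ and is Archimedean (hence has nonempty algebraic interior containing $1$), a Hahn--Banach separation argument produces a linear functional $L:\mathbb{R}[\bx]\to\mathbb{R}$ with $L\ge 0$ on $M$ and $L(h)\le 0$. Normalizing $L(1)=1$ is possible because $L(1)>0$ (otherwise the Archimedean bound $c\pm p\in M$ forces $L\equiv 0$, contradicting $L(h)\le 0< L(c)$ for some $c-h\in M$).

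The heart of the proof is to show that $L$ is represented by a Borel probability measure $\mu$ supported on $S$, i.e.\ $L(p) = \int_S p\,d\mu$ for all $p\in\mathbb{R}[\bx]$. I would do this via the GNS-style construction: the bilinear form $(p,q)\mapsto L(pq)$ is positive semidefinite since $L(p^2)\ge 0$, so quotienting by its kernel gives a pre-Hilbert space on which multiplication by $x_i$ defines symmetric operators $X_i$. The Archimedean bound $N-\sum x_i^2\in M$ forces $\sum\|X_i v\|^2 \le N\|v\|^2$, making each $X_i$ bounded and self-adjoint; the joint spectral theorem for the commuting family $(X_1,\ldots,X_n)$ then yields a spectral measure, which pushes forward to a representing measure $\mu$ for $L$. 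The conditions $L(\sigma h_j)\ge 0$ for all $\sigma\in\Sigma^2[\bx]$ translate into $\int h_j\,d\mu\ge 0$ against arbitrary squares, forcing $\mathrm{supp}(\mu)\subset S$.

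Once this is in place the contradiction is immediate: since $h$ is continuous and strictly positive on the compact set $S\supset\mathrm{supp}(\mu)$, one has $L(h)=\int_S h\,d\mu>0$, contradicting $L(h)\le 0$. The main obstacle is clearly the passage from the abstract nonnegativity $L|_M\ge 0$ to an honest integral representation on $S$; both the boundedness of the multiplication operators and the localization of $\mathrm{supp}(\mu)$ in $S$ rely essentially on the Archimedean hypothesis, which is why Assumption~\ref{Archimedean} cannot be dropped without weakening the conclusion.
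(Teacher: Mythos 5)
The paper does not actually prove Theorem \ref{th::putinar}: it is quoted as a known result with a citation to Putinar \cite{Putinar1993}, so there is no internal proof to compare against. Judged on its own terms, your argument is a faithful sketch of the standard (essentially Putinar's original) functional-analytic proof, and its skeleton is sound: (a) Assumption \ref{Archimedean} does make the quadratic module $M$ Archimedean --- one passes from $N-\sum_j x_j^2\in M$ to $N-x_i^2=(N-\sum_j x_j^2)+\sum_{j\ne i}x_j^2\in M$, and from a bound on a square to a bound on the element via $\frac{1}{2}\bigl[(N-p^2)+(1\pm p)^2\bigr]=\frac{N+1}{2}\pm p$, which is the identity you gesture at; (b) since $1$ is then an algebraic interior point of $M$, the Eidelheit--Kakutani separation theorem applies, and invoking the algebraic interior is the right move: $M$ need not be closed and $\mathbb{R}[\bx]$ carries no useful norm topology, so a naive topological Hahn--Banach separation would not go through; (c) the GNS construction with the bound $\sum_i\Vert X_i v\Vert^2\le N\Vert v\Vert^2$ (which uses $\Sigma^2[\bx]\cdot M\subseteq M$) plus the joint spectral theorem for the commuting self-adjoint family yields a representing probability measure $\mu$ with compact support in $[-\sqrt{N},\sqrt{N}]^n$, and the conditions $L(p^2 h_j)\ge 0$, together with density of polynomials in the continuous functions on the compact support, force $h_j\ge 0$ on $\mathrm{supp}(\mu)$, i.e. $\mathrm{supp}(\mu)\subseteq S$; (d) strict positivity of $h$ on the compact set $S$ then gives $L(h)=\int h\,d\mu>0$, the desired contradiction. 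Two details should be tightened in a full write-up. First, your normalization argument is garbled: the correct statement is that if $L(1)=0$, then the Archimedean bounds $c\pm p\in M$ force $|L(p)|\le cL(1)=0$ for every $p$, so $L\equiv 0$, contradicting that separation produces a \emph{nonzero} functional; the inequality $L(h)\le 0<L(c)$ you invoke cannot work, since $L(c)=cL(1)$ would itself vanish. Second, in the GNS step you should note that the same estimate $L((x_ip)^2)\le N\,L(p^2)$ shows the kernel of the form $(p,q)\mapsto L(pq)$ is invariant under multiplication by $x_i$, so the operators $X_i$ are well defined on the quotient before one completes and applies the spectral theorem.
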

Note that if we fix the degrees of $\sigma_j$'s in \eqref{eq::pp},
then checking the above representation of $h(\bx)$ reduces to an SDP
feasibility problem (c.f. \cite{Laurent2009}). The well-known
Lasserre's hierarchy of SDP
relaxations for polynomial optimization problems is based on Putinar's
Positivstellensatz and the dual moment theory (c.f.
\cite{Lasserre2001,Lasserre2009}).

A sparse version of the representation \eqref{eq::pp} is available if
some sparsity pattern is satisfied by $h$ and $h_j$'s. 
For a subset $I\subseteq\{1,\ldots,n\}$, denote the subset of
variables $\bx_I:=\{x_i\colon i\in I\}$ and $\mathbb{R}[\bx_I]$ as the
polynomial ring in the variables $\bx_I$.  
\begin{assumption}\label{assu::sparsity}
	{\rm
	There are partitions $\{1,\ldots,n\}=I_1\cup\cdots\cup I_l$ and
	$\{1,\ldots,s\}=J_1\cup\cdots\cup J_l$ where $J_i,$ $i = 1, \ldots, l$ are disjoint.	
	The collections $\{I_i\}_{i=1}^l$ and
	$\{J_i\}_{i=1}^l$ satisfy the following:
	\begin{enumerate}[\upshape (i)]
		\item $\forall i\in\{1,\ldots,l-1\}$,
			$\exists k\in\{1,\ldots,i\}$ s.t.
			$I_{i+1}\cap(I_1\cup\cdots\cup I_i)\subseteq I_k$;
		\item $h_j\in\mathbb{R}[\bx_{I_i}]$ for each $j\in J_i$, $1\le i\le l$.
		\item For each $i=1,\ldots,l$, there exists some $N_i\in\mathbb{R}$ such that 
\begin{eqnarray*}
	N_i - \sum_{j\in I_i} x_j^2 =  \sigma_{i,0}+\sum_{j \in J_i}
	\sigma_{i,j} h_j,
\end{eqnarray*}
for some sum of squares polynomials $\sigma_{i,0}, \sigma_{i,j} \in
\Sigma^2[\bx_{I_i}],$ $j\in J_i.$
	\end{enumerate}}
\end{assumption}
The following result enables us
to construct sparse SDP relaxations of polynomial optimization
problems, which can significantly reduce the computational cost (c.f.
\cite{LasserreSparsity,WKKMsparsity}). 

\begin{theorem}[Sparse version of Putinar's
	Positivstellensatz {\upshape
	\cite{LasserreSparsity,WKKMsparsity,GNS2007}}]\label{th::sputinar}
	Suppose that Assumption \ref{assu::sparsity}
	holds. If $h(\bx)\in\sum_{i=1}^l\mathbb{R}[\bx_{I_i}]$ and is positive on
	$S,$ then $h(\bx)$ can be written as
\[
	h(\bx)=\sum_{i=1}^l\left(\sigma_{i,0} + \sum_{j\in J_i}
	\sigma_{i,j} h_j\right),
\]
for some sum of squares polynomials $\sigma_{i,0},
\sigma_{i,j}\in\Sigma^2[\bx_{I_i}]$, $j\in J_i$, $i=1,\ldots,l$.
\end{theorem}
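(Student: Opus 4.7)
The plan is to proceed by induction on the number $l$ of blocks in the variable partition, with the base case $l=1$ being precisely the classical Putinar Positivstellensatz (Theorem~\ref{th::putinar}) applied with the archimedean witness supplied by Assumption~\ref{assu::sparsity}(iii). At $l=1$ the sparse statement coincides with the ordinary one, so nothing new is required.

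For the inductive step, I would invoke the running intersection property (Assumption~\ref{assu::sparsity}(i)) to pick some $k\in\{1,\ldots,l-1\}$ with $I_l\cap(I_1\cup\cdots\cup I_{l-1})\subseteq I_k$. Split the variables in $I_l$ as $\bx_{I_l}=(\bu,\bv)$, where $\bu$ collects those exclusive to $I_l$ and $\bv\subseteq\bx_{I_k}$ collects the variables shared with earlier blocks. Write $h=h'+h''$ with $h'\in\sum_{i<l}\mathbb{R}[\bx_{I_i}]$ free of $\bu$ and $h''\in\mathbb{R}[\bx_{I_l}]$. The goal is then to construct an auxiliary polynomial $\tilde{h}\in\mathbb{R}[\bx_{I_k}]$ (depending only on $\bv$) such that (a) $h''-\tilde{h}$ is strictly positive on the slice of $S$ cut out by the $J_l$-constraints in the $\bx_{I_l}$ variables, and (b) $h'+\tilde{h}$ is strictly positive on the projected set $S_{\le l-1}$ defined by the $J_1\cup\cdots\cup J_{l-1}$ constraints.

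Once such $\tilde{h}$ is in hand, the rest follows immediately. Putinar's theorem applied on the single archimedean block $I_l$, using Assumption~\ref{assu::sparsity}(iii) for $i=l$, yields
\[
h''-\tilde{h}=\sigma_{l,0}+\sum_{j\in J_l}\sigma_{l,j}h_j
\]
with $\sigma_{l,0},\sigma_{l,j}\in\Sigma^2[\bx_{I_l}]$; the inductive hypothesis applied to $h'+\tilde{h}$ on the $l-1$ blocks $\{I_i\}_{i<l}$ (whose inherited RIP and archimedean witnesses transfer without difficulty, since $\tilde{h}\in\mathbb{R}[\bx_{I_k}]\subseteq\sum_{i<l}\mathbb{R}[\bx_{I_i}]$) supplies the remaining $\sigma_{i,0},\sigma_{i,j}$'s. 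Adding the two identities gives the claimed sparse certificate for $h$.

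The hard part is the construction of $\tilde{h}$: it must be a bona fide polynomial, not merely a continuous function of $\bv$, that lies strictly below $\bv\mapsto\inf_{\bu}\{h''(\bu,\bv)\colon h_j(\bu,\bv)\ge 0,\,j\in J_l\}$ on the compact range of $\bv$, yet close enough to it that $h'+\tilde{h}$ retains positivity on $S_{\le l-1}$. My plan is first to check that this infimum is continuous in $\bv$ (a Berge-type argument using the archimedean property of the $J_l$-constraints), then apply Weierstrass approximation to produce a polynomial within any prescribed uniform distance, and finally absorb the remaining slack using the strict positivity margin of $h$ on $S$ together with the compactness of $\Omega$. This is the essence of the Grimm--Netzer--Schweighofer argument, which extends the earlier sparse representation results of Lasserre and Waki--Kim--Kojima--Muramatsu to the generality stated in Theorem~\ref{th::sputinar}.
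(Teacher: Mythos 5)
The paper itself offers no proof of Theorem~\ref{th::sputinar}: it is imported verbatim from \cite{LasserreSparsity,WKKMsparsity,GNS2007}, so your proposal can only be measured against the cited Grimm--Netzer--Schweighofer argument. Your induction skeleton does match it faithfully: peel off the last block via the running intersection property, transfer a compensating polynomial $\tilde{h}$ in the shared variables $\bv\subseteq\bx_{I_k}$, certify $h''-\tilde{h}$ on the archimedean block $I_l$ by the dense Putinar theorem, and feed $h'+\tilde{h}$ to the inductive hypothesis. You also correctly identify the construction of $\tilde{h}$ as the crux. The gap is in how you propose to build it.

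Your plan --- show that $m(\bv):=\inf_{\bu}\{h''(\bu,\bv)\colon h_j(\bu,\bv)\ge 0,\ j\in J_l\}$ is \emph{continuous} by ``a Berge-type argument using the archimedean property,'' then apply Weierstrass --- fails at the continuity claim. Berge's maximum theorem requires the feasible-slice correspondence $\bv\mapsto\{\bu\colon h_j(\bu,\bv)\ge 0,\ j\in J_l\}$ to be lower semicontinuous as well as upper semicontinuous; archimedeanness only delivers closed graph and uniform boundedness, hence upper semicontinuity, and the value function $m$ is in general merely lower semicontinuous. A concrete instance satisfying all of Assumption~\ref{assu::sparsity}: block variables $(u,v)$ with $v$ shared, constraints $1-u^2\ge 0$, $1-v^2\ge 0$, $uv\ge 0$ (note $2-u^2-v^2=(1-u^2)+(1-v^2)$, so (iii) holds), and $h''(u,v)=u$; then $m(v)=0$ for $v>0$ but $m(v)=-1$ for $v\le 0$. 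A discontinuous function admits no uniform polynomial approximation, so the Weierstrass step collapses as stated. A second, related omission: your condition (b) must hold on all of the set cut out by the $J_1\cup\cdots\cup J_{l-1}$ constraints, including points that do \emph{not} extend to points of $S$ and shared values $\bv$ whose block-$l$ slice is empty (where $m(\bv)=+\infty$); there, positivity of $h$ on $S$ gives no information, and keeping $\tilde{h}$ ``close below $m$'' gives no control, so $\tilde{h}$ must separately dominate the upper semicontinuous partial maximum $M(\bv):=\max\{-h'(\cdot)\}$ over the earlier blocks. The correct repair, and the actual content of the cited proof, is a two-sided sandwich: $M$ is u.s.c., $m$ is l.s.c., and $\min_S h>0$ supplies a uniform gap $M\le m-\epsilon_0$ wherever both are finite; one then inserts a polynomial strictly between them, e.g.\ via the Lipschitz lower regularization $m_c(\bv)=\inf_{\bv'}\{m(\bv')+c\Vert\bv-\bv'\Vert\}$ (or a Kat\v{e}tov--Tong-type insertion) followed by Stone--Weierstrass. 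With that lemma substituted for your continuity claim, the rest of your outline goes through.
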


\section{Charactering the weakly efficient solution set}\label{sec::3}
In this section, we study the achievement function associated with
\eqref{VP}, which can be used to characterize the weakly
($\be$-)efficient solution set of \eqref{VP}. 

By defintion of $\Sw$, we have
\begin{align*}
\mathcal{S}_w =&\ \left\{\bx \in \Omega \colon \forall \by \in \Omega,\
	f(\by) - f(\bx) \not\in -\mathbb{R}^m_{++}\right\} \\
=&\ \left\{\bx \in \Omega \colon \forall \by \in \Omega,\ \exists i \in
	\{1, \ldots, m\}\ \textrm{such that}\ f_i(\bx) - f_i(\by) \leq 0\right\} \\
=&\  \left\{\bx \in \Omega \colon \forall \by \in \Omega,\ \min_{i = 1,
\ldots, m} [f_i(\bx) - f_i(\by)] \leq 0 \right\} \\
=&\ \left\{\bx \in \Omega \colon \sup_{\by \in \Omega} \min_{i = 1,
\ldots, m} [f_i(\bx) - f_i(\by)] \leq 0 \right\}.
\end{align*}
Let $\psi: \mathbb{R}^n \rightarrow \mathbb{R}$ be the function given by
\begin{align*}
\psi(\bx) := \sup_{\by \in {\Omega}} \min_{i = 1, \ldots, m} [f_i(\bx) - f_i(\by)].
\end{align*}
The function $\psi(\bx)$ is known as the achievement function in the
area of vector optimization in the literature; see \cite[Section
4.6]{Ehrgott2005} and \cite{Pham2018,Wierzbicki1986}.
Therefore,
\begin{align*}
\mathcal{S}_w = \left\{\bx \in \R^n \colon \psi(\bx) \leq 0\right\} \cap \Omega.
\end{align*}
Moreover, we have the following results, which imply that the function
$\psi(\bx)$ is indeed a {\it merit function} (see \cite{Dutta2017,LiuNgYang2009,Tanabe2020,Tanabe2021}).

\begin{proposition}{\upshape \cite[Lemmas 3.1 and
	3.2]{Pham2018}}\label{jiao12.28a}
	The achievement function $\psi(\bx)$ satisfies
\begin{itemize}
\item[{\rm (i)}] $\psi(\bx) \geq 0$ for all
	$\bx \in \Omega$ and hence $\mathcal{S}_w = \left\{\bx \in \Omega
	\colon \psi(\bx)=0\right\}.$ 
\item[{\rm (ii)}] $\psi(\bx)$ is locally Lipschitz on $\Omega.$
\end{itemize} 
\end{proposition}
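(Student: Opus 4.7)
The plan is to prove the two assertions separately, both leveraging the fact that under assumptions (\textbf{A1})--(\textbf{A2}) each component $f_i = p_i/q_i$ extends to a smooth function on an open neighborhood of the compact set $\Omega$.

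For (i), I would simply plug $\by = \bx$ into the supremum defining $\psi(\bx)$. Since $\bx \in \Omega$ is an admissible choice for $\by$ and $\min_{i=1,\ldots,m}[f_i(\bx) - f_i(\bx)] = 0$, this forces $\psi(\bx) \geq 0$ for every $\bx \in \Omega$. Combined with the characterization
\[
\mathcal{S}_w = \{\bx \in \Omega \colon \psi(\bx) \leq 0\}
\]
derived immediately before the proposition, the equality $\mathcal{S}_w = \{\bx \in \Omega \colon \psi(\bx) = 0\}$ follows at once.

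For (ii), the approach is to promote a Lipschitz bound on each $f_i$ to one on $\psi$ by tracking it through the $\min$ and $\sup$ operations. First, by (\textbf{A2}) each denominator $q_i$ is strictly positive and continuous on the compact set $\Omega$, so by continuity there exists an open neighborhood $U \supset \Omega$ on which $q_i \geq c > 0$ uniformly. Consequently each $f_i = p_i/q_i$ is $C^\infty$ on $U$ and, in particular, Lipschitz with some constant $L_i$ on any compact subset $K \subset U$. Setting $L := \max_{1 \leq i \leq m} L_i$, the elementary inequality $|\min_i a_i - \min_i b_i| \leq \max_i |a_i - b_i|$ yields, for every fixed $\by \in \Omega$ and every $\bx_1, \bx_2 \in K$,
\[
\bigl|\min_{i}[f_i(\bx_1) - f_i(\by)] - \min_{i}[f_i(\bx_2) - f_i(\by)]\bigr| \leq L \, \|\bx_1 - \bx_2\|.
\]

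The main point of the argument---and essentially the only step requiring any care---is the observation that the above Lipschitz constant $L$ is \emph{independent} of the parameter $\by$. Granted this uniformity, the standard fact that a uniform pointwise Lipschitz bound passes to the supremum (via $\sup_\by g_\by(\bx_1) - \sup_\by g_\by(\bx_2) \leq \sup_\by[g_\by(\bx_1) - g_\by(\bx_2)]$ and the symmetric inequality) delivers $|\psi(\bx_1) - \psi(\bx_2)| \leq L \, \|\bx_1 - \bx_2\|$ for all $\bx_1, \bx_2$ in any compact neighborhood $K \subset U$ of $\Omega$. This establishes local Lipschitz continuity of $\psi$ on $\Omega$, completing (ii).
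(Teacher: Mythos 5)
Your proof is correct and follows essentially the same route as the paper: part (i) is the trivial substitution $\by=\bx$ (which the paper dismisses as ``clear''), and for part (ii) the paper simply defers to the argument of \cite[Lemma 3.2]{Pham2018}, noting it remains valid because each rational $f_i$ is locally Lipschitz on $\Omega$ under (\textbf{A1})--(\textbf{A2}) --- precisely the mechanism you spell out, namely a Lipschitz bound on each $f_i$ on a compact neighborhood of $\Omega$ (available since $q_i>0$ extends to a neighborhood by compactness) propagated through the $\min$ and $\sup$ via a constant uniform in $\by$. The only difference is that you make the cited argument self-contained rather than by reference.
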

\begin{proof}
(i)	is clear. If the objective in \eqref{VP} is a vector of
polynomials, (ii) was proved in \cite[Lemma 3.2]{Pham2018} which is
based on the locally Lipschitz property of polynomial functions. Note
that the rational function $f_i$ is locally Lipschitz on $\Omega$ under ({\bf
A1-2}). Hence, the proof of \cite[Lemma 3.2]{Pham2018} is still valid for the
case studied in this paper.
\end{proof}
So far, we know 
the weakly efficient solution set $\mathcal{S}_w$ can be {\it
completely} characterized with the help of the achievement function
$\psi(\bx)$. 
Note that, $\psi(\bx)$ can be fairly complicated and computing
$\psi(\bx)$ by some descent methods directly might be difficult. 
However, as shown below in Proposition~\ref{jiao10.12a}, the sublevels
of $\psi(\bx)$ 
have rather close relation with the set of all weakly $\be$-efficient solutions,
which in turn yields the information of the set of all weakly efficient solutions. 

Recall the definition of the set $\Sw^{\be}$ of all weakly $\be$-efficient
solutions to \eqref{VP}, and clearly by definition, 
$\mathcal{S}_w \subset \mathcal{S}^{\be}_w$ for any $\be\in\mathbb{R}^m_+$. 
Conversely, denote a set-valued mapping
$\mathcal{F}(\cdot): \R^m \rightrightarrows \R^n$ and let 
$\mathcal{F}(\be):=\Sw^{\be}$ for $\be\in\mathbb{R}^m_+$. The
following proposition shows that
$\mathcal{F}(\cdot)$ is {\it continuous at $\bar\be = 0$ relative to $\mathbb{R}^m_+$} 
in the sense of Painlev\'e--Kuratowski
(see \cite[Definition 5.4]{RockafellarWets98}), i.e.,
$\mathcal{F}({\be})\to\mathcal{F}(0)$ as $\be\to 0$.
For convenience, we recall the definitions of continuity
(outer semicontinuity, inner semicontinuity) for set-valued mapppings; 
see \cite[Chapters 4 \& 5]{RockafellarWets98} for more information.
Given a set-valued mapping $F: \R^m \rightrightarrows \R^n,$ 
we denote by 
\[
\limsup_{\by \to \bar\by} F(\by):=\left\{\bx \in \R^n \colon \exists 
\by_k \to \bar \by,\ \exists \bx_k \to \bx \ \textrm{with}\ \bx_k \in F(\by_k)\right\},
\]
\[
\liminf_{\by \to \bar\by} F(\by):=\left\{\bx \in \R^n \colon \forall 
\by_k \to \bar \by,\ \exists \bx_k \to \bx \ \textrm{with}\ \bx_k \in F(\by_k)\right\},
\]
the outer and inner limit of $F$ at $\bar\by$ in the sense of 
Painlev\'e--Kuratowski, respectively.
\begin{definition}\label{def520}{\rm
A set-valued mapping $F: \R^m \rightrightarrows \R^n$ is said to be {\it 
outer semicontinuous} (osc) at $\bar \by$ if 
$
\limsup\limits_{\by \to \bar\by} F(\by) \subset F(\bar\by),
$
and {\it inner semicontinuous} (isc) at $\bar\by$ if 
$
F(\bar\by) \subset \liminf\limits_{\by \to \bar\by} F(\by).
$
It is called {\it continuous} at $\bar\by$ if $F$ is 
simultaneously osc and isc at $\bar\by,$ i.e., $F({\by})\to F(\bar\by)$ as $\by\to \bar\by$.
These terms are invoked {\it relative to $X,$} a subset of $\R^m$ containing 
$\bar\by,$ if the inclusions hold in restriction to convergence $\by \to
\bar\by$ with $\by \in X.$ 
}\end{definition}

It follows from Definition~\ref{def520} that
$\mathcal{F}(\cdot)$ is continuous at $\bar\be = 0$ relative to $\mathbb{R}^m_+$.
Similar to \cite[Proposition 5.12 and Exercise 5.13]{RockafellarWets98}, we
have the following result.
For any $\be \in \R^m_+$, denote $\be_{\max}:=\max\limits_{i
= 1, \ldots, m}\{ \epsilon_i\}$ and $\be_{\min}:=\min\limits_{i = 1, \ldots, m}\{ \epsilon_i\}.$ 
\begin{proposition}\label{prop::dist}
For any $d>0,$ there exists a number $\delta(d)>0$ depending on $d$
such that $\dist (\bu, \Sw) < d$ for any $\bu\in\Sw^{\be},$ i.e.$,$
$\Sw^{\be}\subset\Sw+d\bb,$ whenever $\be_{\max}<\delta(d)$. 
\end{proposition}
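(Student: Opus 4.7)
The plan is to argue by contradiction, exploiting the compactness of $\Omega$ together with the continuity and nonnegativity of the achievement function $\psi$ established in Proposition \ref{jiao12.28a}. Suppose, contrary to the claim, that there exist $d>0$ and sequences $\be^{(k)}\in\mathbb{R}_+^m$ and $\bu^{(k)}\in\Sw^{\be^{(k)}}$ with $\be^{(k)}_{\max}<1/k$ but $\dist(\bu^{(k)},\Sw)\ge d$ for every $k\in\mathbb{N}$. Since $\Omega$ is compact by ({\bf A1}) and every $\bu^{(k)}$ lies in $\Omega$, by passing to a subsequence I may assume $\bu^{(k)}\to\bar\bu\in\Omega$.

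The next step is the key observation that turns membership in $\Sw^{\be}$ into a numerical bound on $\psi$. Unwinding the definition, $\bu\in\Sw^{\be}$ is equivalent to the statement that for every $\by\in\Omega$ there exists an index $i$ with $f_i(\bu)-f_i(\by)\le\epsilon_i$, which can be rewritten compactly as
\[
\sup_{\by\in\Omega}\min_{i=1,\ldots,m}\bigl[f_i(\bu)-f_i(\by)-\epsilon_i\bigr]\le 0.
\]
Using the elementary inequality $\min_i(a_i-\epsilon_i)\ge \min_i a_i-\max_j \epsilon_j$ with $a_i=f_i(\bu^{(k)})-f_i(\by)$, I would take the supremum over $\by\in\Omega$ on both sides and deduce
\[
\psi(\bu^{(k)})-\be^{(k)}_{\max}\le\sup_{\by\in\Omega}\min_{i}\bigl[f_i(\bu^{(k)})-f_i(\by)-\epsilon_i^{(k)}\bigr]\le 0,
\]
i.e., $\psi(\bu^{(k)})\le\be^{(k)}_{\max}<1/k$.

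Passing to the limit $k\to\infty$, the locally Lipschitz (hence continuous) property of $\psi$ on $\Omega$ from Proposition \ref{jiao12.28a}(ii) yields $\psi(\bar\bu)\le 0$, while Proposition \ref{jiao12.28a}(i) gives $\psi(\bar\bu)\ge 0$; therefore $\psi(\bar\bu)=0$ and $\bar\bu\in\Sw$. On the other hand, the continuity of the distance function to a set and the assumption $\dist(\bu^{(k)},\Sw)\ge d$ force $\dist(\bar\bu,\Sw)\ge d>0$, a contradiction. The only non-routine step is the reduction in the second paragraph; the rest is a standard compactness/continuity argument, and in particular no separate handling of $\be_{\min}$ is needed because the one-sided bound through $\be_{\max}$ suffices.
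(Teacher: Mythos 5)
Your proof is correct, and it takes a somewhat different route from the paper's. Both arguments share the same skeleton: argue by contradiction, extract a convergent subsequence $\bu^{(k)}\to\bar\bu\in\Omega$ by compactness, show the limit lies in $\Sw$, and contradict $\dist(\bu^{(k)},\Sw)\ge d$ via continuity of the distance function. The difference is in the middle step. The paper shows $\bar\bu\in\Sw$ directly from the definition of weak efficiency: assuming some $\by'\in\Omega$ strictly dominates $\bar\bu$, it uses only the continuity of the objectives $f_i$ to conclude that $\by'$ would also $\be^{(k)}$-dominate $\bu^{(k)}$ for large $k$, contradicting $\bu^{(k)}\in\Sw^{\be^{(k)}}$. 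You instead route the argument through the achievement function: the estimate $\min_i(a_i-\epsilon_i)\ge\min_i a_i-\be_{\max}$ gives $\psi(\bu^{(k)})\le\be^{(k)}_{\max}$ (this is in effect a re-derivation of the second inclusion of Proposition \ref{jiao10.12a}, which the paper only establishes afterwards), and then continuity of $\psi$ on $\Omega$ (Proposition \ref{jiao12.28a}(ii)) plus its nonnegativity yields $\psi(\bar\bu)=0$, i.e., $\bar\bu\in\Sw$. Your version avoids the nested contradiction and makes transparent how Proposition \ref{prop::dist} fits into the achievement-function framework of the paper; its cost is a dependence on the locally Lipschitz property of $\psi$, a nontrivial result adapted from an external reference, whereas the paper's proof needs nothing beyond continuity of the rational objectives on $\Omega$ under ({\bf A1})--({\bf A2}). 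Both proofs implicitly use that $\Sw\neq\emptyset$ (guaranteed under ({\bf A1})) so that the distance function is well defined; neither makes this explicit, and it is harmless.
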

\begin{proof}
	Suppose that the conclusion does not hold for 
	some $d>0$. Then, for any $k\in\mathbb{N}$, there exist
	$\be^{(k)}$ with $\be^{(k)}_{\max}<\frac{1}{k}$ and a
	point $\bu^{(k)}\in\Sw^{\be^{(k)}}$ such that $\dist(\bu^{(k)},
	\Sw)\ge d$. As $\Omega$ is compact, without loss of generality, we
	can assume that there is a point $\bu'\in\Omega$ such that
	$\lim_{k\rightarrow\infty} \bu^{(k)}=\bu'$. 
    Now we show that
	$\bu'\in\Sw$. To the contrary, suppose that there exists
	$\by'\in\Omega$ such that $f(\by')-f(\bu')\in - \mathbb{R}^m_{++}$,
	i.e., $\max_{i = 1, \ldots, m} [f_i(\by')-f_i(\bu')] < 0$. Due to the
	continuity of $f_i$, there exists $k'\in\mathbb{N}$ such that for
	each $i = 1, \ldots, m$, 
	\[
		\max_{i = 1, \ldots, m} [f_i(\by')-f_i(\bu')] + \frac{1}{k} +
		f_i(\bu') -
		f_i(\bu^{(k)})<0
	\]
	holds for any $k\ge k'$. Then for each $i = 1, \ldots, m$,
	\begin{align*}
			f_i(\by')-f_i(\bu^{(k)})+\be^{(k)}_i&=f_i(\by')-f_i(\bu')+f_i(\bu')-f_i(\bu^{(k)})+\be^{(k)}_i\\
			&\le \max_{i = 1, \ldots, m} [f_i(\by')-f_i(\bu')] +
			f_i(\bu')-f_i(\bu^{(k)})+ \frac{1}{k} \tag{by $\be^{(k)}_{\max}<\frac{1}{k}$} \\
          &<0,
		\end{align*}
	which means that $f(\by')-f(\bu^{(k)}) + \be^{(k)} \in - \mathbb{R}^m_{++}$,
	i.e., $\bu^{(k)}\not\in\Sw^{\be^{(k)}}$, a contradiction.
	Hence, $\bu'\in\Sw$ and $\dist(\bu', \Sw)=0$. However, due to the
	continuity of distance function, one has
	\[
		\dist(\bu', \Sw)=\lim_{k\rightarrow\infty} \dist(\bu^{(k)},
		\Sw)\ge d>0,
	\]
	a contradiction. 
\end{proof}

Furthermore, the following proposition allows us to study the
set $\Sw^{\be}$ of all weakly $\be$-efficient solutions by means of
sublevels of $\psi(\bx)$.

\begin{proposition}\label{jiao10.12a}
	For any $\be \in \R^m_+,$ we have
\begin{align}\label{jiao10.08b}
	\left\{\bx \in \Omega \colon \psi(\bx) \leq \be_{\min} \right\}
	\subset
	\mathcal{S}^{\be}_w\subset
	\left\{\bx \in \Omega \colon \psi(\bx) \leq \be_{\max} \right\}.
\end{align}
	Particularly$,$ if $\be_{\max}=\be_{\min},$ then 
	\[
	\left\{\bx \in \Omega \colon \psi(\bx) \leq
		\be_{\min}=\be_{\max} \right\}=\Sw^{\be}. 
	\]
\end{proposition}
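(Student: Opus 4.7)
The plan is to reformulate membership in $\Sw^{\be}$ as a sublevel condition on a suitable auxiliary function and then sandwich this function between $\psi(\bx)-\be_{\max}$ and $\psi(\bx)-\be_{\min}$.

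First, I would unwind the definition of $\Sw^{\be}$ the same way the paper unwinds $\Sw$ at the start of Section \ref{sec::3}. The condition $f(\by)-f(\bx)+\be\notin -\mathbb{R}^m_{++}$ is equivalent to the existence of some $i$ with $f_i(\bx)-f_i(\by)-\epsilon_i\le 0$, that is, $\min_{i}[f_i(\bx)-f_i(\by)-\epsilon_i]\le 0$. Taking supremum over $\by\in\Omega$, membership $\bx\in\Sw^{\be}$ becomes
\[
\phi_{\be}(\bx):=\sup_{\by\in\Omega}\min_{i=1,\ldots,m}[f_i(\bx)-f_i(\by)-\epsilon_i]\le 0.
\]

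Next, I would establish the elementary two-sided bound comparing $\phi_{\be}$ with $\psi$. For any real numbers $a_1,\ldots,a_m$ and $c_1,\ldots,c_m$, one has
\[
\min_i a_i-\max_i c_i\;\le\;\min_i[a_i-c_i]\;\le\;\min_i a_i-\min_i c_i,
\]
which is immediate from $-\max_i c_i\le -c_j\le -\min_i c_i$ applied to the index $j$ achieving $\min_i a_i$ on the right, and applied to the index $j$ achieving $\min_i[a_i-c_i]$ on the left. Setting $a_i=f_i(\bx)-f_i(\by)$ and $c_i=\epsilon_i$ and then taking $\sup_{\by\in\Omega}$ gives, for every $\bx\in\mathbb{R}^n$,
\[
\psi(\bx)-\be_{\max}\;\le\;\phi_{\be}(\bx)\;\le\;\psi(\bx)-\be_{\min}.
\]

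From this double inequality the two inclusions fall out at once. If $\bx\in\Omega$ and $\psi(\bx)\le\be_{\min}$, then $\phi_{\be}(\bx)\le\psi(\bx)-\be_{\min}\le 0$, so $\bx\in\Sw^{\be}$, giving the left inclusion in \eqref{jiao10.08b}. Conversely, if $\bx\in\Sw^{\be}$ then $\phi_{\be}(\bx)\le 0$, so $\psi(\bx)\le\phi_{\be}(\bx)+\be_{\max}\le\be_{\max}$, which is the right inclusion. The final sentence of the proposition is then immediate: when $\be_{\max}=\be_{\min}$ both bounds coincide and both inclusions become equalities, yielding $\Sw^{\be}=\{\bx\in\Omega:\psi(\bx)\le\be_{\min}\}$.

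There is no real obstacle here; the whole statement is essentially a bookkeeping exercise on the min/max inequalities above, once $\Sw^{\be}$ is rewritten as the $0$-sublevel set of $\phi_{\be}$. The only place where one should be slightly careful is in justifying the direction $\min_i[a_i-c_i]\le \min_i a_i-\min_i c_i$, since it is not always realized by the same index; writing out which index achieves each minimum, as sketched above, makes this step rigorous.
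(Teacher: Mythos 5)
Your proof is correct. It is, however, a repackaging rather than a fundamentally new argument: the paper proves the two inclusions separately and pointwise --- the left one by contradiction (if $\bu\notin\Sw^{\be}$, pick $\by'$ with $f_i(\bu)-f_i(\by')>\epsilon_i$ for all $i$, so $\min_i[f_i(\bu)-f_i(\by')]>\be_{\min}$ and hence $\psi(\bu)>\be_{\min}$), and the right one directly (for $\bu\in\Sw^{\be}$ and each $\by$, pick an index $k_{\by}$ with $f_{k_{\by}}(\bu)-f_{k_{\by}}(\by)\le\be_{k_{\by}}\le\be_{\max}$). Your two index-picking steps in the elementary min/max lemma are exactly these two arguments; what you do differently is to bundle them into a single reusable statement, namely the identification of $\Sw^{\be}$ with the $0$-sublevel set of the shifted achievement function $\phi_{\be}$ together with the sandwich $\psi(\bx)-\be_{\max}\le\phi_{\be}(\bx)\le\psi(\bx)-\be_{\min}$. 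That buys a unified derivation of both inclusions at once, a quantitative statement slightly stronger than the proposition itself, and a transparent proof of the final equality when $\be_{\max}=\be_{\min}$; the paper's version avoids introducing an auxiliary function and stays closer to the raw definitions. One small point of care: in the left half of your inequality $\min_i a_i-\max_i c_i\le\min_i[a_i-c_i]$, at the index $j$ realizing $\min_i[a_i-c_i]$ you need both $a_j\ge\min_i a_i$ and $-c_j\ge-\max_i c_i$, not only the bound on $c_j$ that your sketch mentions; this is trivial but should be written out.
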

\begin{proof}
To show the first relation in \eqref{jiao10.08b}, suppose to the contrary that there exists $\bu\in\Omega$ 
such that $\psi(\bu)\le\be_{\min}$ but $\bu\not\in\Sw^{\be}$. Then, there
exists $\by'\in\Omega$ such that $f(\by')-f(\bu)+\be\in -
\mathbb{R}^m_{++}$, i.e., $f_i(\bu)-f_i(\by')-\be_i > 0$ for each
$i = 1, \ldots, m$. Thus, $\min_{i=1,\ldots,m}
[f_i(\bu)-f_i(\by')]>\be_{\min}$ which implies that
$\psi(\bu)>\be_{\min}$, a contradiction.

Now, fix a point $\bu\in\Sw^{\be}$. For any $\by\in\Omega$, by
definition, there exists $k_{\by}\in\{1,\ldots,m\}$ depending on $\by$ such
that $f_{k_{\by}}(\by)-f_{k_{\by}}(\bu)+\be_{k_{\by}}\ge 0$. Then,
$\min_{i=1,\ldots,m} [f_i(\bu)-f_i(\by)]\le \be_{k_{\by}}$ for all $\by\in\Omega,$
and hence
\[
	\psi(\bu)=\max_{\by\in\Omega}\min_{i=1,\ldots,m}[f_i(\bu)-f_i(\by)]\le
	\be_{\max},
\]
thus, the second relation in \eqref{jiao10.08b} holds. 
Consequently, the conclusion follows.
\end{proof}

\section{Approximations of weakly ($\be$-)efficient solution set}\label{sec::4}
In this section, we will construct polynomial approximations of the
achievement function $\psi(\bx)$ from above and use their sublevel sets to
approximate the set of all weakly ($\be$-)efficient solutions
to \eqref{VP}. The construction of these polynomial approximations of
$\psi(\bx)$ is inspired by \cite{Lasserre2015} and can be reduced to
SDP problems.  As $\Omega$ is compact, after a possible re-scaling of
the $g_j$'s, we may and will assume that $\Delta:=[-1,
1]^n\supseteq\Omega$ in the rest of this paper.

\subsection{Approximations of achievement function}
To construct polynomial approximations of $\psi(\bx)$, we need first
compute upper and lower bounds of $f_i(\bx)$, $i=1,\ldots,m,$ over $\Omega$. 
To this end, for each $i=1,\ldots, m$, we compute a number
$f_i^{\text{lower}}\in\mathbb{R}$ satisfying
\begin{equation}\label{eq::f_*}
	\begin{aligned}
		&p_i(\bx)-f_i^{\text{lower}} q_i(\bx)
		=\sigma_{i,0}(\bx)+\sum_{j=1}^r\sigma_{i,j}(\bx)g_j(\bx)+\sum_{j=1}^n\sigma_{i,r+j}(\bx)(1-x_j^2),\\ 
		&\sigma_{i,0}, \sigma_{i,j}\in\Sigma^2[\bx], \
		j=1,\ldots,r+n,\  \deg(\sigma_{i,0})\le 2k_i, \
		k_i\in\mathbb{N},\\
		&\deg(\sigma_{i,j}g_j)\le 2k_i,\ j=1,\ldots,r, \  
		\deg(\sigma_{i,r+j}(1-x_j^2))\le 2k_i,\ j=1,\ldots,n, 
	\end{aligned}
\end{equation}
which is equivalent to an SDP feasibility problem (c.f. \cite{Laurent2009}). 
Under ({\bf A1-2}), each $\frac{p_i(\bx)}{q_i(\bx)}$ is bounded from below on
$\Omega$ and $p_i(\bx)-f_i^{\text{lower}} q_i(\bx)>0$ on $\Omega$ for
any $f_i^{\text{lower}}<\min_{\bx\in\Omega}\frac{p_i(\bx)}{q_i(\bx)}$.
Hence, by Putinar's Positivstellensatz, a number $f_i^{\text{lower}}$
satisfying \eqref{eq::f_*} always exists for $k_i$ large enough (note
that Assumption \ref{Archimedean} holds due to the redundant
polynomials $1-x_j^2$, $j=1,\ldots,n,$ added in \eqref{eq::f_*}).
Clearly, it holds that  
\[
	f^{\text{lower}}:=\min_{i=1,\ldots,m}f_i^{\text{lower}}\le \min_{i
	= 1, \ldots, m,\ \bx \in \Omega}\frac{p_i(\bx)}{q_i(\bx)}. 
\]
Similarly, replace $p_i(\bx)-f_i^{\text{lower}} q_i(\bx)$ in
\eqref{eq::f_*} by $f_i^{\text{upper}} q_i(\bx) - p_i(\bx)$,
where $f_i^{\text{upper}}$ denotes another real number. Then
similarly, such a number $f_i^{\text{upper}}$ exists for $k_i$ large
enough and can be computed by solving another SDP feasibility problem.
Then, we have 
\[
	f^{\text{upper}}:=\max_{i=1,\ldots,m}f_i^{\text{upper}}\ge
	\max_{i = 1, \ldots, m,\ \bx \in \Omega}\frac{p_i(\bx)}{q_i(\bx)}.
\]

Now, we deal with the achievement function $\psi(\bx)$ over $\Delta$ from the viewpoint of polynomial optimization problems.
For each $\bx\in\mathbb{R}^n$, it holds that
\begin{equation*}
	\begin{aligned}
		\psi(\bx) := & \sup_{\by \in {\Omega}} \min_{i = 1, \ldots, m}
		\left[f_i(\bx) - f_i(\by)\right] \\
       = & \sup_{\by \in {\Omega}} \min_{i = 1, \ldots, m}
		\left[\frac{p_i(\bx)}{q_i(\bx)} - \frac{p_i(\by)}{q_i(\by)}\right] \\
		= & \sup_{\by \in {\Omega}, z \in \R}\ \left\{z \colon  \frac{p_i(\bx)}{q_i(\bx)} - \frac{p_i(\by)}{q_i(\by)} \geq z, \ i = 1, \ldots, m \right\}.
	\end{aligned}
\end{equation*}

For any $\bx\in\Delta$, let
\begin{equation}\label{poly}
	\left\{
	\begin{aligned}
		\tpsi(\bx) := &\ \max_{\by \in \R^n, z \in \R}\ z \\
& \qquad \textrm{s.t.}\quad\ p_i(\bx)q_i(\by) - p_i(\by)q_i(\bx) - z q_i(\bx)q_i(\by) \geq 0, \ i = 1, \ldots, m, \\
& \qquad\qquad\ \  \by \in \Omega,\
z\in[f^{\text{lower}}-f^{\text{upper}}, f^{\text{upper}}-f^{\text{lower}}].\\
	\end{aligned}\right.
\end{equation}
In other words, $\tpsi(\bx)$ over $\Delta$ can be
seen as the optimal value function of the {\it parameter}
polynomial optimization problem \eqref{poly}.
Under ({\bf A1-2}), we have
\begin{proposition}\label{prop::eq}
	$\tpsi(\bx)=\psi(\bx)$ for all $\bx\in\Omega$. Hence$,$
	Propositions \ref{jiao12.28a} and \ref{jiao10.12a} also hold for
	$\tpsi$. 
\end{proposition}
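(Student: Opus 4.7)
The plan is to verify, for each fixed $\bx\in\Omega$, that the parametric program \eqref{poly} has the same optimal value as the sup-min expression defining $\psi(\bx)$. The key observation is that, under assumption ({\bf A2}), we have $q_i(\bx)q_i(\by)>0$ whenever $\bx,\by\in\Omega$, so the inequality constraint
\[
p_i(\bx)q_i(\by)-p_i(\by)q_i(\bx)-z\,q_i(\bx)q_i(\by)\ge 0
\]
is equivalent to $z\le f_i(\bx)-f_i(\by)$. Taking the minimum over $i$, the feasibility conditions in \eqref{poly} with $\by\in\Omega$ reduce to $z\le \min_{i=1,\ldots,m}[f_i(\bx)-f_i(\by)]$, together with the box constraint $z\in[f^{\text{lower}}-f^{\text{upper}},f^{\text{upper}}-f^{\text{lower}}]$.

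Next I would argue that the additional box constraint on $z$ is never binding. By construction, $f^{\text{lower}}\le f_i(\bu)\le f^{\text{upper}}$ for every $\bu\in\Omega$ and every $i$, so for any $\bx,\by\in\Omega$ the value $\min_i[f_i(\bx)-f_i(\by)]$ automatically lies in $[f^{\text{lower}}-f^{\text{upper}},f^{\text{upper}}-f^{\text{lower}}]$. Therefore, for each fixed $\by\in\Omega$, the supremum of $z$ over the feasible set of \eqref{poly} equals $\min_{i=1,\ldots,m}[f_i(\bx)-f_i(\by)]$, and this supremum is attained.

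Finally, taking the supremum over $\by\in\Omega$ on both sides yields
\[
\tpsi(\bx)=\sup_{\by\in\Omega}\min_{i=1,\ldots,m}[f_i(\bx)-f_i(\by)]=\psi(\bx),
\]
for every $\bx\in\Omega$. Compactness of $\Omega$ plus continuity of each $f_i$ (guaranteed by ({\bf A1}) and ({\bf A2})) makes the outer sup a max, so \eqref{poly} is indeed a well-posed polynomial program. The equalities in Propositions \ref{jiao12.28a} and \ref{jiao10.12a} carry over to $\tpsi$ on $\Omega$ because they are stated in terms of values of $\psi$ at points of $\Omega$.

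The only delicate point—really the only place one could slip—is verifying that the sign of $q_i(\bx)q_i(\by)$ is indeed positive on the whole product $\Omega\times\Omega$ so that dividing preserves the inequality direction, and confirming that the chosen interval for $z$ contains every relevant difference $\min_i[f_i(\bx)-f_i(\by)]$; both are immediate from ({\bf A2}) and the definitions of $f^{\text{lower}}$ and $f^{\text{upper}}$. Note that this argument does \emph{not} assert $\tpsi=\psi$ on all of $\Delta$: outside $\Omega$ the sign of $q_i(\bx)$ may fail, so the two expressions need not agree there.
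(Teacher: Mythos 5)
Your proposal is correct and follows what is essentially the paper's (implicit) argument: the paper states Proposition \ref{prop::eq} without proof, treating it as an immediate consequence of ({\bf A1})--({\bf A2}), and your verification — that $q_i(\bx)q_i(\by)>0$ on $\Omega\times\Omega$ makes each polynomial constraint equivalent to $z\le f_i(\bx)-f_i(\by)$, while the box constraint on $z$ is non-binding because $f^{\text{lower}}\le f_i\le f^{\text{upper}}$ on $\Omega$ by the Positivstellensatz certificates defining these bounds — is precisely the reasoning left to the reader. Your closing caveat that the equality $\tpsi=\psi$ is claimed only on $\Omega$, not on all of $\Delta$, correctly matches the scope of the statement.
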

Next, we construct polynomial approximations of $\tpsi$ over
$\Delta$ from above by means of the SDP method proposed in
\cite{Lasserre2015}, and use their sublevel sets to approximate the
set of all weakly ($\be$-)efficient solutions to \eqref{VP}.

Consider the following sets
\begin{equation*}
\K:=\left\{(\bx, \by, z) \in \R^n \times \R^n \times \R\ \colon 
	\left\{
	\begin{aligned}
		&p_i(\bx)q_i(\by) - p_i(\by)q_i(\bx) - z q_i(\bx)q_i(\by) \geq
		0,\\
		& i = 1, \ldots, m,\ \bx\in\Delta,\ \by \in \Omega,\\
		&z\in[f^{\text{lower}}-f^{\text{upper}},
		f^{\text{upper}}-f^{\text{lower}}] 
	\end{aligned}\right.
\right\},
\end{equation*}
and 
\[
	\K_{\bx}:=\left\{(\by, z) \in \mathbb{R}^n\times\mathbb{R}\ \colon\ (\bx,
\by, z)\in\K \right\},\quad\text{for}\ \bx\in\Delta.
\]
Then it is clear that $\K$ is compact and for any $\bx\in\Delta$,
$\tpsi(\bx)=\max_{(\by,z)\in\K_{\bx}} z$.

As proved in \cite[Theorem 1]{Lasserre2015}, a sequence of polynomial
approximations of $\tpsi(\bx)$ on $\Delta$ from above exists mainly due to the
Stone--Weierstrass theorem.
\begin{proposition}{\upshape (c.f. \cite[Theorem 1]{Lasserre2015})}\label{prop::psi} 
	There exists a sequence of polynomials $\{\psi_k \in \R[\bx] \colon
	k \in \N\}$ such that $\psi_k(\bx) \geq \tpsi(\bx)$ for all $\bx
	\in \Delta,$ and $\{\psi_k\}_{k\in\N}$ converges to $\tpsi$ in $L_1(\Delta)$, i.e., 
\begin{align*}
\lim_{k \to \infty} \int_{\Delta}| \psi_k(\bx) - \tpsi(\bx)| d \bx =
0.
\end{align*}
\end{proposition}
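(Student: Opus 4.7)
The plan is to follow the Stone--Weierstrass argument of \cite{Lasserre2015}. The starting point is the observation that $\K$ is a compact set, being a closed subset of the compact product $\Delta \times \Omega \times [f^{\text{lower}}-f^{\text{upper}},\, f^{\text{upper}}-f^{\text{lower}}]$. With the convention $\sup\emptyset = f^{\text{lower}}-f^{\text{upper}}$ to treat the (possibly empty) fibers $\K_\bx$ arising when $\bx\in\Delta\setminus\Omega$ and some $q_i(\bx)$ vanishes or changes sign, $\tpsi$ is bounded on $\Delta$ by $|f^{\text{upper}}-f^{\text{lower}}|$. A standard subsequence argument---picking $(\by_n,z_n)\in\K_{\bx_n}$ with $z_n\to\limsup\tpsi(\bx_n)$ and using the closedness of $\K$ to extract a limit $(\by,z)\in\K_\bx$---shows that $\tpsi$ is upper semicontinuous on $\Delta$, hence Lebesgue measurable and in $L_1(\Delta)$.

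Next, I would invoke the classical fact that any bounded upper semicontinuous function on the compact set $\Delta$ is the pointwise decreasing limit of a sequence of continuous functions $h_k\searrow\tpsi$; by the bounded convergence theorem, $\int_\Delta h_k\,d\bx\to\int_\Delta\tpsi\,d\bx$. For each $k$, apply the Weierstrass polynomial approximation theorem on the box $\Delta$ to select a polynomial $\tilde\psi_k\in\mathbb{R}[\bx]$ with $\|\tilde\psi_k-h_k\|_\infty<1/k$, and set $\psi_k:=\tilde\psi_k+1/k$. The pointwise inequality $\psi_k\ge h_k\ge \tpsi$ on $\Delta$ is immediate, while
\[
\int_\Delta|\psi_k(\bx)-\tpsi(\bx)|\,d\bx \;=\; \int_\Delta(\psi_k-\tpsi)\,d\bx \;\le\; \frac{2\,\vol(\Delta)}{k} + \int_\Delta(h_k-\tpsi)\,d\bx \;\longrightarrow\; 0,
\]
as $k\to\infty$, which is precisely the stated $L_1$ convergence.

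The main technical obstacle is securing upper semicontinuity of $\tpsi$ cleanly on all of $\Delta$, most notably dealing with fibers $\K_\bx$ that can become empty on the portion of $\Delta$ lying outside $\Omega$. Redefining $\tpsi$ to the constant $f^{\text{lower}}-f^{\text{upper}}$ on the empty-fiber set is both consistent with the $z$-range constraint already built into \eqref{poly} and harmless for the dominance inequality $\psi_k\ge\tpsi$ that we are trying to prove. Once upper semicontinuity is in hand, the remainder is the textbook combination of a measure-theoretic approximation (\emph{usc as a decreasing limit of continuous functions}) with the Weierstrass polynomial approximation theorem, the additive shift $1/k$ being the simple trick that restores pointwise dominance while preserving $L_1$ convergence. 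I would note that this argument produces \emph{some} sequence $\{\psi_k\}$ but says nothing about its computability; the conversion to tractable SDPs is deferred to the subsequent construction via Putinar's Positivstellensatz (Theorem~\ref{th::putinar}) applied to the positivity condition $\psi(\bx)-z\ge 0$ on $\K$.
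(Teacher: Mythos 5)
Your proof is correct and takes essentially the same route as the paper, whose own ``proof'' is just a citation of \cite[Theorem 1]{Lasserre2015}: upper semicontinuity of the optimal-value function $\tpsi$ on the compact set $\Delta$ (via compactness of $\K$ and a limiting-subsequence argument), approximation of a bounded u.s.c.\ function from above by a decreasing sequence of continuous functions, and the Weierstrass theorem with the $1/k$ shift to restore the pointwise dominance $\psi_k \geq \tpsi$. Your explicit handling of possibly empty fibers $\K_{\bx}$ for $\bx\in\Delta\setminus\Omega$ (via the convention $\sup\emptyset = f^{\text{lower}}-f^{\text{upper}}$) is a detail the paper glosses over---it later even asserts that $\K_{\bx}\neq\emptyset$ for every $\bx\in\Delta$---but your convention is consistent with the statement and harmless for everything downstream.
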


Let $\{\psi_k \in \R[\bx] \colon k \in \N\}$ be as in Proposition
\ref{prop::psi}. 
For any $\delta>0$ and $k\in\mathbb{N}$, denote
\[
	\mathcal{A}(\delta, k):=\left\{\bx \in \Omega \colon \psi_k(\bx) \le
\delta\right\}.
\]
For any $\delta>0$, with a slight abuse of notation, we denote
$\Sw^{\delta}:=\Sw^{\be}$, where
$\be=(\delta,\ldots,\delta)$. The following result can be derived
by slightly modifying the proof of \cite[Theorem 3]{Lasserre2015}. It 
shows that we can approximate the set $\mathcal{S}^{\delta}_w$ by the
sequence $\{\mathcal{A}(\delta, k)\}_{k\in\mathbb{N}}$.
\begin{theorem}\label{th::appro}
	For any $\delta>0,$ we have $\mathcal{A}(\delta, k)
	\subset \mathcal{S}^{\delta}_w$ and 
	\begin{equation}\label{eq::convergence}
\vol\left(\{\bx\in\Omega \colon \psi(\bx)<\delta\}\right)\le 
\lim_{k \to \infty}\mathrm{vol}\left(\mathcal{A}(\delta,
k)\right)\le \mathrm{vol}\left(\left\{\bx \in \Omega \colon \psi(\bx) \le
\delta \right\}\right) = \vol\left(\mathcal{S}^{\delta}_w\right).
\end{equation}
Consequently$,$ if $\vol\left(\{\bx\in\Omega \colon
\psi(\bx)=\delta\}\right)=0,$ then
$\lim_{k\to\infty}\vol\left(\Sw^{\delta}\setminus\mathcal{A}(\delta,
k)\right)=0$. 
\end{theorem}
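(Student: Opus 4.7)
The plan is to derive the theorem by exploiting two properties of the approximating polynomials $\{\psi_k\}_{k\in\mathbb{N}}$ guaranteed by Proposition~\ref{prop::psi}: the pointwise dominance $\psi_k(\bx)\ge\tpsi(\bx)$ on $\Delta$, and the $L_1(\Delta)$-convergence $\psi_k\to\tpsi$. Proposition~\ref{prop::eq} lets me identify $\tpsi$ with $\psi$ on $\Omega$, and the ``particularly'' part of Proposition~\ref{jiao10.12a} (applied with $\be=(\delta,\ldots,\delta)$) identifies $\Sw^{\delta}$ with $\{\bx\in\Omega\colon\psi(\bx)\le\delta\}$.

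I would begin with the inclusion $\mathcal{A}(\delta,k)\subset\Sw^{\delta}$ together with the upper bound. For $\bx\in\mathcal{A}(\delta,k)$, one has $\bx\in\Omega$ and $\psi(\bx)=\tpsi(\bx)\le\psi_k(\bx)\le\delta$, so Proposition~\ref{jiao10.12a} immediately places $\bx$ in $\Sw^{\delta}$. The same chain of inequalities yields $\mathcal{A}(\delta,k)\subset\{\bx\in\Omega\colon\psi(\bx)\le\delta\}$, so by monotonicity of Lebesgue measure $\vol(\mathcal{A}(\delta,k))\le\vol(\{\psi\le\delta\})=\vol(\Sw^{\delta})$, giving the rightmost bound uniformly in $k$ (in particular for the $\limsup$).

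The crux is the lower bound $\vol(\{\psi<\delta\})\le\liminf_{k\to\infty}\vol(\mathcal{A}(\delta,k))$. The $L_1(\Delta)$-convergence of $\psi_k$ to $\tpsi$ restricts to $L_1(\Omega)$-convergence since $\Omega\subset\Delta$, so along a suitable subsequence $\{\psi_{k_j}\}$ one has $\psi_{k_j}\to\psi$ almost everywhere on $\Omega$. At a.e. $\bx\in\{\psi<\delta\}\cap\Omega$, the strict inequality $\psi(\bx)<\delta$ together with $\psi_{k_j}(\bx)\to\psi(\bx)$ forces $\psi_{k_j}(\bx)\le\delta$ for all sufficiently large $j$, so $\mathbf{1}_{\mathcal{A}(\delta,k_j)}(\bx)\to 1$ pointwise a.e. on $\{\psi<\delta\}\cap\Omega$. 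Fatou's lemma then gives $\vol(\{\psi<\delta\})\le\liminf_{j}\vol(\mathcal{A}(\delta,k_j))$. To promote this to the full sequence I would apply the same argument to an arbitrary subsequence of $\{\vol(\mathcal{A}(\delta,k))\}$ chosen to realise its $\liminf$: every such subsequence contains a further subsequence along which $\psi_k\to\psi$ a.e., and Fatou on that further subsequence produces the same lower bound.

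Finally, the consequential assertion is immediate: when $\vol(\{\psi=\delta\})=0$, the outer quantities in \eqref{eq::convergence} coincide, so $\lim_{k\to\infty}\vol(\mathcal{A}(\delta,k))$ exists and equals $\vol(\Sw^{\delta})$. Combining with $\mathcal{A}(\delta,k)\subset\Sw^{\delta}$ and finite additivity, $\vol(\Sw^{\delta}\setminus\mathcal{A}(\delta,k))=\vol(\Sw^{\delta})-\vol(\mathcal{A}(\delta,k))\to 0$. The only technically subtle step is the promotion from a.e. convergence along a subsequence to the $\liminf$ bound for the full sequence; this is routine measure theory but requires the subsequence-of-subsequence manoeuvre sketched above, and is the main point where care is needed.
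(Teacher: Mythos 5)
Your proof is correct, and it reaches \eqref{eq::convergence} by a genuinely different technical device than the paper for the key lower bound, while the rest coincides. The inclusion $\mathcal{A}(\delta,k)\subset\Sw^{\delta}$ and the upper bound are argued exactly as in the paper: identify $\psi=\tpsi$ on $\Omega$ (Proposition \ref{prop::eq}), use the dominance $\psi_k\ge\tpsi$ on $\Delta$, and invoke the ``particularly'' case of Proposition \ref{jiao10.12a} to write $\Sw^{\delta}=\{\bx\in\Omega\colon\psi(\bx)\le\delta\}$. For the lower bound, the paper (following the proof of Theorem 3 in Lasserre's paper) first upgrades the $L_1(\Delta)$-convergence of Proposition \ref{prop::psi} to convergence in measure and then, for each $\ell\ge 1$, decomposes the set $\{\bx\in\Omega\colon\tpsi(\bx)\le\delta-\tfrac{1}{\ell}\}$ according to whether $\psi_k(\bx)>\delta$ or $\psi_k(\bx)\le\delta$; on the first piece $|\psi_k-\tpsi|\ge\tfrac{1}{\ell}$, so its volume vanishes as $k\to\infty$, giving $\vol\left(\{\psi\le\delta-\tfrac{1}{\ell}\}\right)\le\liminf_k\vol\left(\mathcal{A}(\delta,k)\right)$, and letting $\ell\to\infty$ with continuity of measure from below produces $\vol\left(\{\psi<\delta\}\right)$ on the left. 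You instead extract an a.e.-convergent subsequence from $L_1$-convergence, apply Fatou's lemma to the indicators of $\mathcal{A}(\delta,k_j)$ over the open sublevel set (where the strict inequality $\psi(\bx)<\delta$ forces $\psi_{k_j}(\bx)\le\delta$ eventually), and promote to the full $\liminf$ by the subsequence-of-subsequences argument. Both routes are valid; the paper's is quantitative and subsequence-free, with the $\tfrac{1}{\ell}$ slack playing globally the role your strict pointwise inequality plays locally, while yours trades the explicit set decomposition for standard a.e./Fatou machinery. A small merit of your write-up is that it makes explicit what the paper leaves implicit: the limit in \eqref{eq::convergence} need not exist a priori, and your $\liminf$/$\limsup$ formulation, combined with the observation that the two outer bounds coincide when $\vol\left(\{\bx\in\Omega\colon\psi(\bx)=\delta\}\right)=0$, makes the final ``consequently'' step airtight.
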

\begin{proof}
	By Proposition \ref{jiao10.12a}, it is clear that $\mathcal{A}(\delta, k)
	\subset \mathcal{S}^{\delta}_w$. 
By Proposition~\ref{prop::psi}, 
$\psi_k$ converges to $\tpsi$ in measure, that is, for every $\alpha > 0,$
\begin{align}\label{jiao12.28b}
\lim_{k \to \infty}\mathrm{vol}\left(\{\bx \in \Delta \colon 
| \psi_k(\bx) - \tpsi(\bx)| \ge \alpha \}\right) = 0.
\end{align}
Consequently, for every $\ell \geq 1,$ it holds that
\begin{align*}
&\mathrm{vol}\left(\left\{\bx \in \Omega \colon 
\psi(\bx) \leq \delta + \tfrac{-1}{\ell}\right\}\right)\\
=\ & \mathrm{vol}\left(\left\{\bx \in \Omega \colon 
\tpsi(\bx) \leq \delta + \tfrac{-1}{\ell}\right\}\right)
\tag{by Proposition \ref{prop::eq}}\\
=\ & \mathrm{vol}\left(\left\{\bx \in \Omega \colon 
\tpsi(\bx) \leq \delta + \tfrac{-1}{\ell}\right\} \cap 
\left\{\bx \in \Omega \colon \psi_k(\bx) > \delta \right\} \right) \\
& + \mathrm{vol}\left(\left\{\bx \in \Omega \colon 
\tpsi(\bx) \leq \delta + \tfrac{-1}{\ell}\right\} \cap 
\left\{\bx \in \Omega \colon \psi_k(\bx) \leq \delta \right\}
\right)\\
=\ &\lim_{k \to \infty} \mathrm{vol}\left(\left\{\bx \in \Omega \colon 
\tpsi(\bx) \leq \delta + \tfrac{-1}{\ell}\right\} \cap 
\left\{\bx \in \Omega \colon \psi_k(\bx) \leq \delta \right\} \right) 
\tag{by \eqref{jiao12.28b}}\\
\leq \ & \lim_{k \to \infty} \mathrm{vol}\left( \left\{\bx \in \Omega 
\colon \psi_k(\bx) \leq \delta \right\} \right)\\
\leq\ & \mathrm{vol}\left(\left\{\bx \in \Omega \colon \tpsi(\bx) \le
\delta \right\}\right)\\
=\ &\mathrm{vol}\left(\mathcal{S}^{\delta}_w\right). \tag{by
	Propositions \ref{jiao10.12a} and \ref{prop::eq}}
\end{align*} 
Now, taking $\ell \to \infty$ yields \eqref{eq::convergence} and the
conclusion.
\end{proof}

\begin{corollary}\label{cor::convergence}
	The following assertions are true.
	\begin{enumerate}[\upshape (i)]
	\item For any $d>0,$ there exists $\delta(d)>0$ depending on $d$ such that 
		\[
			\mathcal{A}(\delta, k)\subset \Sw + d\bb
		\]
		holds for any $\delta<\delta(d)$ and any $k\in\mathbb{N}.$
	\item For $d>0$ and any $\delta>0$ with $\vol\left(\{\bx\in\Omega \colon
		\psi(\bx)=\delta\}\right)=0,$ there exists $k(d,
		\delta)\in\mathbb{N}$ depending
		on $\delta$ and $d$ such that
		\[
			\mathcal{S}_w\cap\cl\left(\inte\left(\Omega\setminus\mathcal{S}_w\right)\right)\subset\mathcal{A}(\delta,
		k) + d\bb
	\]
	holds for any $k>k(d, \delta)$. 
\end{enumerate}
\end{corollary}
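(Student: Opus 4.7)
The plan for part (i) is short and rests on chaining two facts already established. Theorem \ref{th::appro} gives the inclusion $\mathcal{A}(\delta, k) \subset \Sw^{\delta}$ for every $\delta>0$ and every $k\in\mathbb{N}$. On the other hand, applying Proposition \ref{prop::dist} to $\be=(\delta,\ldots,\delta)$, whose coordinates satisfy $\be_{\max}=\delta$, supplies a threshold $\delta(d)>0$ such that $\Sw^{\delta}\subset \Sw+d\bb$ whenever $\delta<\delta(d)$. Composing the two inclusions yields $\mathcal{A}(\delta,k)\subset\Sw+d\bb$ as soon as $\delta<\delta(d)$, uniformly in $k$.

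Part (ii) is the substantive statement and I would attack it by combining the continuity of $\psi$ near $\Sw$, the topological assumption $\bar\bx\in\cl(\inte(\Omega\setminus\Sw))$, and the measure-theoretic convergence from Theorem \ref{th::appro}. Fix $\bar\bx$ in the set $T:=\Sw\cap\cl(\inte(\Omega\setminus\Sw))$. Proposition \ref{jiao12.28a} gives $\psi(\bar\bx)=0$ and continuity of $\psi$ on $\Omega$, so there exists $r(\bar\bx)\in(0,d/2)$ with $\psi(\bx)<\delta$ for all $\bx\in B(\bar\bx,r(\bar\bx))\cap\Omega$. The topological hypothesis forces the set
\[
E(\bar\bx) := B(\bar\bx,r(\bar\bx)/2)\cap\inte(\Omega\setminus\Sw)
\]
to be a nonempty open subset of $\R^n$, hence of strictly positive Lebesgue measure, and by construction $E(\bar\bx)\subset\Sw^{\delta}$. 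Thus each point of $T$ has an arbitrarily close ``witness set'' of positive volume sitting inside $\Sw^{\delta}$.

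The main obstacle, and the place where the topological hypothesis and compactness both become essential, is to turn these pointwise witnesses into a single choice of $k$ that works uniformly over $T$. Observing that $T$ is compact (the intersection of the closed sets $\Sw=\{\psi=0\}\cap\Omega$ and $\cl(\inte(\Omega\setminus\Sw))$ inside compact $\Omega$), I would extract a finite subcover $\{B(\bar\bx_i,r_i/2)\}_{i=1}^I$ of $T$ with $r_i:=r(\bar\bx_i)$ and set $\mu_0:=\min_{i}\vol(E(\bar\bx_i))>0$. Using the hypothesis $\vol(\{\bx\in\Omega\colon\psi(\bx)=\delta\})=0$, Theorem \ref{th::appro} then yields $k(d,\delta)\in\mathbb{N}$ with $\vol(\Sw^{\delta}\setminus\mathcal{A}(\delta,k))<\mu_0$ for all $k>k(d,\delta)$. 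Since $E(\bar\bx_i)\subset\Sw^{\delta}$ has volume at least $\mu_0$, it must meet $\mathcal{A}(\delta,k)$ for each $i$; and for any $\bar\by\in T$, choosing $i_0$ with $\bar\by\in B(\bar\bx_{i_0},r_{i_0}/2)$ and any $\bx\in E(\bar\bx_{i_0})\cap\mathcal{A}(\delta,k)$, a triangle inequality gives $\|\bx-\bar\by\|<r_{i_0}<d$, so $\bar\by\in\mathcal{A}(\delta,k)+d\bb$. I expect the delicate point to be convincing oneself that the witness volumes $\vol(E(\bar\bx))$ do not degenerate along $T$; without the hypothesis $\bar\bx\in\cl(\inte(\Omega\setminus\Sw))$ this would genuinely fail (for instance at an isolated weakly efficient point), and it is precisely compactness plus a finite subcover that compels a uniform positive lower bound $\mu_0$.
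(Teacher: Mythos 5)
Your proof is correct, and it is worth comparing carefully with the paper's, because you have in fact supplied a step the paper only gestures at. Part (i) is identical: Theorem \ref{th::appro} gives $\mathcal{A}(\delta,k)\subset\Sw^{\delta}$ and Proposition \ref{prop::dist} gives the threshold $\delta(d)$. For part (ii), the paper's proof fixes a \emph{single} point $\bu\in\Sw\cap\cl\left(\inte\left(\Omega\setminus\Sw\right)\right)$, uses the sequence $\bu^{(l)}\to\bu$ from $\inte\left(\Omega\setminus\Sw\right)$ and continuity of $\tpsi$ to produce a neighborhood $\mathcal{O}^{(l_0)}\subset\Sw^{\delta}$ of positive volume within distance $d$ of $\bu$, and then argues by contradiction along a subsequence $k_j\to\infty$ that $\mathcal{A}(\delta,k_j)$ must eventually meet $\mathcal{O}^{(l_0)}$, since otherwise $\vol\left(\Sw^{\delta}\setminus\mathcal{A}(\delta,k_j)\right)\ge\vol\left(\mathcal{O}^{(l_0)}\right)>0$ would contradict the volume convergence (the paper cites Theorem \ref{th::main} here, apparently a slip for Theorem \ref{th::appro}). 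Read literally, this yields a threshold $k$ that depends on $\bu$ through $\mathcal{O}^{(l_0)}$, whereas the statement asserts one $k(d,\delta)$ valid for every point of $\Sw\cap\cl\left(\inte\left(\Omega\setminus\Sw\right)\right)$ at once; the paper passes over this uniformity with ``the conclusion follows.'' Your compactness argument --- noting that the set in question is closed in the compact $\Omega$ (since $\psi$ is continuous on $\Omega$ by Proposition \ref{jiao12.28a}, so $\Sw$ is closed), extracting a finite subcover $\{B(\bar\bx_i,r_i/2)\}$, and taking $\mu_0:=\min_i\vol\left(E(\bar\bx_i)\right)>0$ so that $\vol\left(\Sw^{\delta}\setminus\mathcal{A}(\delta,k)\right)<\mu_0$ forces every witness set to meet $\mathcal{A}(\delta,k)$ --- is precisely what makes the threshold uniform, and your direct volume comparison also replaces the paper's subsequence contradiction by a cleaner positive argument. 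The local ingredients are the same (continuity of the achievement function at a point where it vanishes, the topological hypothesis producing open witness sets inside $\Sw^{\delta}$ via Proposition \ref{jiao10.12a}, and the volume convergence of Theorem \ref{th::appro} under $\vol\left(\{\bx\in\Omega\colon\psi(\bx)=\delta\}\right)=0$), so your route is best described as a completed and slightly strengthened version of the paper's proof rather than a departure from it.
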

\begin{proof}
	(i)	Since $\mathcal{A}(\delta, k)\subset\Sw^{\delta}$ for any
	$k\in\mathbb{N}$ by Theorem \ref{th::appro}, the existence of
	$\delta(d)$ is a direct consequence of Proposition
	\ref{prop::dist}. 

(ii) Let
$\bu\in\mathcal{S}_w\cap\cl\left(\inte\left(\Omega\setminus\mathcal{S}_w\right)\right)\neq\emptyset$,
then $\tpsi(\bu)=0$ by Propositions \ref{jiao12.28a} (i) and \ref{prop::eq}, and there
exists a sequence
$\{\bu^{(l)}\}_{l\in\mathbb{N}}\subset\inte\left(\Omega\setminus\mathcal{S}_w\right)$
such that $\lim_{l\to\infty}\bu^{(l)}=\bu$. Fix the numbers $d, \delta>0$. By the
continuity of $\tpsi$ on $\Omega$ (Proposition \ref{prop::eq}), there exists $l_0\in\mathbb{N}$
depending on $d$ and $\delta$ such
that $\tpsi(\bu^{(l_0)})<\delta$ and $\Vert\bu^{(l_0)}-\bu\Vert < d$. 
As $\bu^{(l_0)}\in\inte\left(\Omega\setminus\mathcal{S}_w\right)$, by
the continuity of $\tpsi$ again, there is a neighborhood
$\mathcal{O}^{(l_0)}\subset\Omega$ of $\bu^{(l_0)}$ such
that $\tpsi(\bx)<\delta$ and $\Vert \bx-\bu\Vert<d$ for all
$\bx\in\mathcal{O}^{(l_0)}$. Proposition \ref{jiao10.12a} implies that
$\mathcal{O}^{(l_0)}\subset\Sw^{\delta}$. Then, we show that there
exists $k(d, \delta)\in\mathbb{N}$ such that for any $k>k(d, \delta)$,
it holds that $\mathcal{A}(\delta,
k)\cap\mathcal{O}^{(l_0)}\neq\emptyset$ which means that
$\bu\in\mathcal{A}(d, \delta)+d\bb$ and the conclusion follows. To the
contrary, suppose that such $k(d, \delta)$ does not exist, then there is
subsequence $\{\mathcal{A}(\delta, k_j)\}_{j\in\mathbb{N}}$ with
$k_j\to\infty$ such that $\mathcal{A}(\delta,
k_j)\cap\mathcal{O}^{(l_0)}=\emptyset$ for all $k_j$. Then,
$\mathrm{vol}(\Sw^{\delta}\setminus\mathcal{A}(\delta, k_j))\ge
\mathrm{vol}(\mathcal{O}^{(l_0)})>0$ for all $k_j$. As $\vol\left(\{\bx\in\Omega \colon
		\psi(\bx)=\delta\}\right)=0$, it contradicts the conclusion in
		Theorem \ref{th::main}. 
\end{proof}

\begin{remark}\label{rk::convergence}{\rm From Corollary \ref{cor::convergence} and its
	proof, we can see that
	\begin{itemize}
		\item[{\rm(i)}] If
			$\mathcal{S}_w\cap\cl\left(\inte\left(\Omega\setminus\mathcal{S}_w\right)\right)\neq\emptyset$,
			then for any $\delta>0$, $\mathcal{A}(\delta,
			k)\neq\emptyset$ for $k$ large enough. In fact, we have
			$\mathcal{O}^{(l_0)}\subset\{\bx\in\Omega \colon
			\psi(\bx)<\delta\}$ for the neighborhood
			$\mathcal{O}^{(l_0)}$ in the proof of Corollary
			\ref{cor::convergence}. Then, \eqref{eq::convergence}
			implies that $\mathcal{A}(\delta, k)\neq\emptyset$ for $k$
			large enough. 
		\item[{\rm(ii)}] Suppose there is a sequence
			$\{\delta_i\}_{i\in\mathbb{N}}$ with $\delta_i\downarrow
			0$ such that $\vol\left(\{\bx\in\Omega \colon
			\psi(\bx)=\delta_i\}\right)=0$
	holds for all $i$ and
	$\Sw=\mathcal{S}_w\cap\cl\left(\inte\left(\Omega\setminus\mathcal{S}_w\right)\right)$,
	then Corollary \ref{cor::convergence} (i) and (ii)
	indicate that 
	the whole set of the weakly efficient solutions of
	\eqref{VP} can be approximated arbitrarily well by
	$\mathcal{A}(\delta, k)$ with sufficiently small $\delta>0$ and
	sufficiently large $k\in\mathbb{N}$. 
	\end{itemize}
}
\end{remark}

\subsection{Computational aspects}
%
Now we follow the scheme proposed in \cite[Section 3.3]{Lasserre2015}
to construct a sequence of polynomials
$(\psi_k)_{k\in\mathbb{N}}\in\mathbb{R}[\bx]$ as defined in
Proposition \ref{prop::psi}.  

We denote the following $m+r+2n+1$ polynomials in $\mathbb{R}[\bx,\by,z]$
\begin{equation}\label{eq::h}
	\begin{aligned}
		&h_{1,1}(\bx,\by,z)=p_1(\bx)q_1(\by) - p_1(\by)q_1(\bx) - z q_1(\bx)q_1(\by), \ldots, \\
		&h_{1,m}(\bx,\by,z)=p_m(\bx)q_m(\by) - p_m(\by)q_m(\bx) - z q_m(\bx)q_m(\by), \\
		&h_{2,1}(\bx,\by,z)=g_1(\by), \ldots, h_{2,r}(\bx,\by,z)=g_r(\by),\\
		&h_{2,r+1}(\bx,\by,z)=1-y_1^2, \ldots, h_{2,r+n}(\bx,\by,z)=1-y_n^2,\\
		&h_{3,1}(\bx,\by,z)=1-x_1^2, \ldots,
		h_{3,n}(\bx,\by,z)=1-x_n^2,\\
		&h_{4,1}(\bx,\by,z)=(f^{\text{upper}}-f^{\text{lower}})^2-z^2.
	\end{aligned}
\end{equation}
Denote by
$J_1=\{1,\ldots,m\}$, $J_2=\{1,\ldots,r+n\}$, $J_3=\{1,\ldots,n\}$ and
$J_4=\{1\}$. Then, 
\[
	\K=\{ (\bx, \by, z) \in \R^n \times \R^n \times \R\ \colon \
		h_{i,j}(\bx,\by,z)\ge 0,\ i=1,\ldots,4,\ j\in J_i\}. 
\]

Let $\lambda$ be the scaled Lebesgue measure on $\Delta$, i.e.,
$d\lambda(\bx)=d\bx/2^n$, and 
\[
	\gamma_{\alpha}:=\int_{\Delta} \bx^{\alpha}
	d\lambda(\bx)=\left\{
		\begin{aligned}
			&0,	\quad\text{if $\alpha_i$ is odd for some } i\\
			&\prod_{i=1}^n(\alpha_i+1)^{-1},\quad\text{otherwise}
		\end{aligned}
		\right.
\]
be the moment of $\lambda$ for each $\alpha\in\mathbb{N}^n$.

For each $k \in \N,$ with $k \ge  \max\left\{\lceil \tfrac{\deg
	h_{i,j}}{2}\rceil,\ i=1,\ldots, 4,\ j\in J_i\right\}$, consider the following
optimization problem,
\begin{equation}\label{sdp}
	\left\{
	\begin{aligned}
		\rho^*_k:=\inf_{\phi, \sigma_0, \sigma_{i,j}}\
		&\int_{\Delta}\phi(\bx)d\lambda(\bx)\left(=\sum_{\alpha \in \N^n_{2k}} c_{\alpha}
		\gamma_{\alpha}\right)\\
		\text{s.t.\quad}&\phi(\bx)=\sum_{\alpha \in \N^n_{2k}}
		c_{\alpha}\bx^{\alpha}\in\mathbb{R}[\bx]_{2k},\ c_{\alpha}\in\mathbb{R},\\
		&\phi(\bx)-z = \sigma_0 + \sum_{i=1}^4\sum_{j \in J_i}
		\sigma_{i,j} h_{i,j},\ \sigma_0, \sigma_{i,j} \in\Sigma^2[\bx, \by, z],\\ 
		&\deg(\sigma_0), \deg(\sigma_{i,j} h_{i,j}) \leq 2k,\
		i=1,\ldots,4, \ j \in J_i,\\
	\end{aligned}\right. \tag{P$_k$}
\end{equation}
which can be reduced to an SDP problem (c.f. \cite{Lasserre2001,Lasserre2010}).
Clearly, for any $\left(\phi, \sigma_0, \sigma_{i,j}\right)$ feasible
to \eqref{sdp}, we have $\phi(\bx)\ge \tpsi(\bx)$ on $\Delta$. 
The following result follows directly from \cite[Theorem
5]{Lasserre2015} and we include here a brief proof for the sake of
completeness. It shows that we can compute the sequence of polynomials
$\{\psi_k \in \R[\bx] \colon k \in \N\}$ in Proposition
\ref{prop::psi} by solving \eqref{sdp}. 

\begin{theorem} \label{th::main}
	We have
	$\lim_{k\rightarrow\infty}\rho^*_k=\int_{\Delta}\tpsi(\bx)d\lambda(\bx)$. 
	Consequently$,$ let $\left(\psi_k, \sigma_0^{(k)}, \sigma^{(k)}_{i,j}\right)$ be a nearly optimal
solution to \eqref{sdp}$,$ e.g.$,$ $\int_{\Delta}\psi_{k}d\lambda(\bx)\le\rho^*_k+1/k,$
then $\psi_k(\bx)\ge \tpsi(\bx)$ on $\Delta$ and 
\[
\lim_{k \to \infty} \int_{\Delta}| \psi_k(\bx) - \tpsi(\bx)| d\lambda(\bx) = 0.
\]
\end{theorem}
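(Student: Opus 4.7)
The plan is to follow the template of \cite[Theorem 5]{Lasserre2015}: first establish $\lim_k\rho_k^*=\int_\Delta\tpsi\,d\lambda$ by a pair of matching bounds, and then deduce the $L^1$-convergence from monotonicity.

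For the easy direction, I would observe that for any triple $(\phi,\sigma_0,\sigma_{i,j})$ feasible to \eqref{sdp}, the identity
\[
\phi(\bx)-z=\sigma_0(\bx,\by,z)+\sum_{i=1}^{4}\sum_{j\in J_i}\sigma_{i,j}(\bx,\by,z)\,h_{i,j}(\bx,\by,z)
\]
forces $\phi(\bx)-z\ge 0$ on $\K$. Fixing $\bx\in\Delta$ and taking the supremum over $(\by,z)\in\K_\bx$ yields $\phi(\bx)\ge\tpsi(\bx)$ on $\Delta$. Integrating against $\lambda$ gives $\rho_k^*\ge\int_\Delta\tpsi\,d\lambda$ for every admissible $k$.

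For the reverse direction, fix $\varepsilon>0$. I would first build a polynomial $\phi\in\mathbb{R}[\bx]$ with $\phi\ge\tpsi+\varepsilon/2$ on $\Delta$ and $\int_\Delta(\phi-\tpsi)\,d\lambda\le\varepsilon$. This combines a standard mollification of the bounded, upper semicontinuous parametric value function $\tpsi$ with the Weierstrass theorem, as in Lasserre's original argument. The key point is then that $\phi(\bx)-z\ge\phi(\bx)-\tpsi(\bx)\ge\varepsilon/2>0$ on $\K$, and that $\K$ is Archimedean because the explicit polynomials $1-x_j^2,\ 1-y_j^2,\ (f^{\text{upper}}-f^{\text{lower}})^2-z^2$ in \eqref{eq::h} bound all variables $\bx,\by,z$. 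Putinar's Positivstellensatz (Theorem \ref{th::putinar}) therefore produces SOS multipliers $\sigma_0,\sigma_{i,j}\in\Sigma^2[\bx,\by,z]$ certifying the identity required by \eqref{sdp}. For $k$ large enough so that $2k$ majorizes all the degrees $\deg(\sigma_0)$ and $\deg(\sigma_{i,j}h_{i,j})$, the triple $(\phi,\sigma_0,\sigma_{i,j})$ is feasible to \eqref{sdp}, giving $\rho_k^*\le\int_\Delta\phi\,d\lambda\le\int_\Delta\tpsi\,d\lambda+\varepsilon$. Letting $\varepsilon\downarrow 0$, and combining with the lower bound, yields $\rho_k^*\to\int_\Delta\tpsi\,d\lambda$.

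For the $L^1$ conclusion, the lower-bound step applied to the nearly optimal $\psi_k$ gives $\psi_k\ge\tpsi$ pointwise on $\Delta$, so $|\psi_k-\tpsi|=\psi_k-\tpsi$ and
\[
\int_\Delta|\psi_k-\tpsi|\,d\lambda=\int_\Delta\psi_k\,d\lambda-\int_\Delta\tpsi\,d\lambda\le \rho_k^*+\tfrac{1}{k}-\int_\Delta\tpsi\,d\lambda\xrightarrow{k\to\infty}0,
\]
which closes the argument.

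\textbf{Main obstacle.} The delicate step is the construction, in the reverse direction, of a polynomial strict upper bound $\phi$ for the (possibly only upper semicontinuous) value function $\tpsi$ whose $L^1$-distance to $\tpsi$ is controllable: the mollification and Weierstrass approximation must be combined carefully so that the positive margin $\phi-\tpsi\ge\varepsilon/2$ survives on the whole of $\Delta$, allowing Putinar's theorem to produce the SOS certificate. By contrast, the Archimedean hypothesis itself is trivially in force here, thanks to the explicit box/interval constraints baked into $\K$ through \eqref{eq::h}.
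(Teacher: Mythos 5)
Your proposal is correct and follows essentially the same route as the paper's proof: the lower bound $\rho_k^*\ge\int_\Delta\tpsi\,d\lambda$ from feasibility, the upper bound by applying Putinar's Positivstellensatz (Theorem \ref{th::putinar}) to a strictly positive shift of an $L^1$-close polynomial majorant of $\tpsi$ (your $\varepsilon/2$ margin plays exactly the role of the paper's $1/\ell$ perturbation of a minimizing sequence), the Archimedean condition verified through the explicit box/interval constraints in $\K$, and the identical $L^1$ argument at the end. The only difference is bookkeeping: where you sketch the construction of the majorant via regularization and Weierstrass, the paper delegates that step to \cite[Corollary 2.6]{Lasserre2015} through an infinite-dimensional linear program (and the needed majorants are in any case already supplied by Proposition \ref{prop::psi}).
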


\begin{proof} We only need to prove that
	$\lim_{k\rightarrow\infty}\rho^*_k=\int_{\Delta}\tpsi(\bx)d\lambda(\bx)$.
Consider the following infinite-dimensional linear program 
\[
	\left\{
	\begin{aligned}
		\rho^*:=\inf_{\phi}\
		&\int_{\Delta}\phi(\bx)d\lambda(\bx)\left(=\sum_{\alpha \in \N^n_{2k}} c_{\alpha}
		\gamma_{\alpha}\right)\\
		\text{s.t.\ }&\phi(\bx)=\sum_{\alpha \in \N^n}
		c_{\alpha}\bx^{\alpha}\in\mathbb{R}[\bx],\ c_{\alpha}\in\mathbb{R},\\
		&\phi(\bx)-z \ge 0, \quad \forall\ (\bx, \by, z)\in\K.
	\end{aligned}\right.
\]
It is clear that $\Delta,$ $\K$ are
compact and $\K_{\bx}$ is nonempty for every $\bx\in\Delta$. Then, by
\cite[Corollary 2.6]{Lasserre2015}, it holds that
$\rho^*=\int_{\Delta}\tpsi(\bx)d\lambda(\bx)$. Let
$(\phi_{\ell})_{\ell\in\mathbb{N}}$ be a minimizing sequence of the
above problem.  
For any $\ell\in\mathbb{N}$, let
$\phi'_{\ell}(\bx)=\phi_{\ell}(\bx)+1/{\ell}$, then we
have $\phi'_{\ell}(\bx)-z \ge 1/{\ell}>0$ on $\K$. Notice that
	\[
		2n+(f^{\text{upper}}-f^{\text{lower}})^2-\sum_{i=1}^n\left(x_i^2+y_i^2\right)-z^2=\sum_{j=r+1}^{r+n}h_{2,j}+\sum_{j=1}^{n}h_{3,j}+h_{4,1},
	\]
	that is, Assumption \ref{Archimedean} holds for the defining
	polynomials of $\K$. Therefore, by
	Putinar's Positivstellensatz (Theorem \ref{th::putinar}), there
	exists $k_{\ell}\in\mathbb{N}$ and
$\sigma_0^{(\ell)}, \sigma_{i,j}^{(\ell)} \in\Sigma^2[\bx, \by, z]$
such that $(\phi'_{\ell}, \sigma_0^{(\ell)}, \sigma_{i,j}^{(\ell)})$ is
a feasible solution to (P$_{k_{\ell}}$). Note that $\rho^*\le\rho^*_k$
holds for any $k\in\mathbb{N}$. Then, it implies that
\[
\int_{\Delta}\tpsi(\bx)d\lambda(\bx)=\rho^*\le\rho^*_{k_\ell}\le \int_{\Delta}\phi_{\ell}(\bx)d\lambda(\bx)+\frac{1}{\ell}\ \downarrow
		\rho^* =\int_{\Delta}\tpsi(\bx)d\lambda(\bx).
\]
As $\rho^*_k$ is monotone, we have
$\lim_{k\rightarrow\infty}\rho^*_k=\int_{\Delta}\tpsi(\bx)d\lambda(\bx)$.
\end{proof}

Next, we propose a sparse version of the SDP problem \eqref{sdp} by
exploiting its sparsity pattern, which reduces the computational costs
at the order $k$.
Add a redundant polynomial
\begin{equation*}\label{eq::redundant}
	h_{1,m+1}(\bx,\by,z)=2n+(f^{\text{upper}}-f^{\text{lower}})^2-\sum_{i=1}^n\left(x_i^2+y_i^2\right)-z^2
\end{equation*}
in \eqref{eq::h} and reset $J_1=\{1, \ldots, m+1\}$. 
Denote the following subsets of variables
$I_1=\{\bx, \by, z\}$, $I_2=\{\by\}$, $I_3=\{\bx\}$ and $I_4=\{z\}$.
For $i=1,\ldots,4$, denote by $\mathbb{R}[I_i]$ the ring of real
polynomials in the variables in $I_i$. Then, the following conditions hold.
\begin{enumerate}[(i)]
	\item  For each $i=1, 2, 3$, there exists some $s\le i$ such that
		$I_{i+1}\cap\bigcup_{j=1}^i I_j\subseteq I_s$;
	\item For each $i=1,\ldots,4$, and each $j\in J_i$,
		$h_{i,j}\in\mathbb{R}[I_i]$;
\item $\sum_{\alpha\in\mathbb{N}^n_{2k}} c_{\alpha}\bx^{\alpha} -z$ in
	\eqref{sdp} is the difference of two polynomials in
	$\mathbb{R}[I_3]$ and $\mathbb{R}[I_4]$, respectively.
\end{enumerate} 

Then, by the sparse version of Putinar's
Positivstellensatz (Theorem \ref{th::sputinar}), we can construct a
sparse version of \eqref{sdp} as 
\begin{equation}\label{ssdp}
	\left\{
	\begin{aligned}
		\tilde{\rho}^*_k:=\inf_{\phi, \sigma_{i,0}, \sigma_{i,j}}\
		&\int_{\Delta}\phi(\bx)d\lambda(\bx)\left(=\sum_{\alpha \in \N^n_{2k}} c_{\alpha}
		\gamma_{\alpha}\right)\\
		\text{s.t.\quad }&\phi(\bx)=\sum_{\alpha \in \N^n_{2k}}
		c_{\alpha}\bx^{\alpha}\in\mathbb{R}[\bx]_{2k},\ c_{\alpha}\in\mathbb{R},\\
		&\phi(\bx)-z = \sum_{i=1}^4\left(\sigma_{i,0} + \sum_{j \in J_i}
		\sigma_{i,j} h_{i,j}\right),\ \sigma_{i, 0}, \sigma_{i,j} \in\Sigma^2[I_i],\\ 
		&\deg(\sigma_{i, 0}), \deg(\sigma_{i,j} h_{i,j}) \leq 2k,\
		i=1,\ldots,4, \ j \in J_i.\\
	\end{aligned}\right. \tag{SP$_k$}
\end{equation}
\begin{theorem}
	The statements for \eqref{sdp} in Theorem \ref{th::main} also
	hold for \eqref{ssdp}.
\end{theorem}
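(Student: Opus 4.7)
The plan is to mirror the proof of Theorem \ref{th::main} line by line, replacing the invocation of Putinar's Positivstellensatz (Theorem \ref{th::putinar}) with its sparse version (Theorem \ref{th::sputinar}). As before, it suffices to show that $\lim_{k\to\infty}\tilde{\rho}^*_k = \int_{\Delta}\tpsi(\bx)d\lambda(\bx)$; the pointwise bound $\psi_k(\bx)\ge\tpsi(\bx)$ on $\Delta$ for any feasible $(\phi,\sigma_{i,0},\sigma_{i,j})$ of \eqref{ssdp} is immediate from the sparse SOS decomposition and the definition of $\K$, and the $L_1$-convergence conclusion follows from this together with the limit of $\tilde{\rho}^*_k$ exactly as in the final step of the proof of Theorem \ref{th::main}.

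First I would record the easy inequality $\rho^* \le \tilde{\rho}^*_k$, where $\rho^*$ is the value of the infinite-dimensional linear program used in the proof of Theorem \ref{th::main}: any feasible $\phi$ for \eqref{ssdp} satisfies $\phi(\bx)\ge\tpsi(\bx)$ on $\Delta$, so its integral is at least $\int_{\Delta}\tpsi d\lambda=\rho^*$. Next, I would verify that Assumption \ref{assu::sparsity} is satisfied by the defining polynomials of $\K$ together with the index partitions $I_1,\ldots,I_4$ and $J_1,\ldots,J_4$ introduced before \eqref{ssdp}; conditions (i) and (ii) are already noted in the text just before the theorem, so the only nontrivial item is the Archimedean condition (iii). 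I would check this block-by-block: for $I_2$ and $I_3$ the polynomials $1-y_j^2$ and $1-x_j^2$ directly yield bounds on $\sum y_j^2$ and $\sum x_j^2$; for $I_4=\{z\}$ the single polynomial $h_{4,1}$ furnishes the bound on $z^2$; for $I_1$ the newly added redundant polynomial $h_{1,m+1}$ provides the Archimedean certificate for all variables simultaneously.

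With the sparsity assumption in place, I would then replicate the constructive step from the proof of Theorem \ref{th::main}. Take a minimizing sequence $(\phi_\ell)_{\ell\in\N}$ of the infinite-dimensional program and perturb to $\phi'_\ell:=\phi_\ell+1/\ell$, so that $\phi'_\ell(\bx)-z\ge 1/\ell>0$ on $\K$. Since $\phi'_\ell(\bx)-z\in\mathbb{R}[I_3]+\mathbb{R}[I_4]\subseteq\sum_{i=1}^4\mathbb{R}[\bx_{I_i}]$, Theorem \ref{th::sputinar} produces, for some $k_\ell\in\N$, a sparse SOS decomposition of the required form. This gives a feasible triple for $(\mathrm{SP}_{k_\ell})$ with objective value $\int_\Delta\phi_\ell\, d\lambda+1/\ell$. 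Combining with the monotonicity of $\tilde{\rho}^*_k$ in $k$ and the lower bound from the first step yields the squeeze $\rho^*\le\tilde{\rho}^*_{k_\ell}\le\int_\Delta\phi_\ell\, d\lambda+1/\ell\downarrow\rho^*$, hence $\lim_k\tilde{\rho}^*_k=\rho^*=\int_\Delta\tpsi\, d\lambda$.

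The only genuinely new ingredient beyond the proof of Theorem \ref{th::main} is the Archimedean verification in $(iii)$, and it is precisely to make this step automatic that $h_{1,m+1}$ was introduced as a redundant constraint in \eqref{eq::h}; I do not foresee any other obstacle, since the remaining $L_1$-convergence argument (invoking $\psi_k\ge\tpsi$, $\int_\Delta\psi_k\, d\lambda\le\tilde{\rho}^*_k+1/k$, and $|\psi_k-\tpsi|=\psi_k-\tpsi$) transfers verbatim.
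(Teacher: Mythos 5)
Your proposal is correct and takes essentially the same route as the paper's proof: reuse the perturbed minimizing sequence $\phi'_{\ell}$ from the proof of Theorem \ref{th::main}, note that Assumption \ref{assu::sparsity} holds thanks to the redundant polynomial $h_{1,m+1}$, apply the sparse Positivstellensatz (Theorem \ref{th::sputinar}) to get feasibility for (SP$_{k_\ell}$), and conclude by the same squeeze argument. Your write-up is in fact more explicit than the paper's, which leaves the block-by-block Archimedean verification and the membership $\phi'_{\ell}(\bx)-z\in\sum_{i=1}^4\mathbb{R}[I_i]$ implicit.
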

\begin{proof}
	Let $\phi'_{\ell}$ be the polynomial in the proof of Theorem
	\ref{th::main}. Note that Assumption \ref{assu::sparsity} holds by
	adding the redundant
	polynomial $h_{1,m+1}$. Then, by Theorem \ref{th::sputinar}, there exists
$\tilde{k}_{\ell}\in\mathbb{N}$ and
$\sigma_{i, 0}^{(\ell)}, \sigma_{i,j}^{(\ell)} \in\Sigma^2[I_i]$,
$i=1,\ldots,4, \ j \in J_i$
such that $\left(\phi'_{\ell}, \sigma_{i,0}^{(\ell)}, \sigma_{i,j}^{(\ell)}\right)$ is
a feasible solution to (SP$_{\tilde{k}_{\ell}}$). Hence, the
conclusion follows from the proof of Theorem \ref{th::main}.
\end{proof}

\subsection{Comparisons with existing SDP relaxation
methods}\label{sec::compare}
Now, we compare our method with the recent existing work in
\cite{Nie2021} and \cite{Magron2014}. All the three methods can deal with
vector (nonlinear) polynomial optimization problems by SDP
relaxations, without convexity assumptions on the involved functions.
For convenience, we assume that all objectives $f_i$'s in \eqref{VP}
are polynomials, i.e., $q_i(\bx)=1$, $i=1, \ldots, m$.

To get weakly efficient solutions to \eqref{VP}, Nie and
Yang~\cite{Nie2021} used the linear scalarization and the Chebyshev
scalarization techniques to scalarize \eqref{VP} to a single objective
polynomial optimization problem and solve it by the SDP relaxation
method proposed in \cite{Nie2019}. Precisely, for a given nonzero 
weighting parameter
$w:=(w_1, \ldots, w_m)\in\mathbb{R}^m$, the linear scalarization
scalarizes the problem \eqref{VP} to 
\begin{equation}\label{eq::ls}
	\min\ w_1f_1(\bx)+\cdots+w_mf_m(\bx) \quad\text{s.t. }\ \bx\in\Omega,
\end{equation}
and the Chebyshev scalarization scalarizes the problem \eqref{VP} to 
\begin{equation}\label{eq::cs}
	\min_{\bx\in\Omega} \max_{1\le i\le m} w_i(f_i(\bx) -
	f_i^*),
\end{equation}
where each $f_i^*$ is the goal which decision maker wants to achieve for the
objective $f_i$. In general, by the scalarizations \eqref{eq::ls} and
\eqref{eq::cs}, we can only
find one or some particular (weakly) efficient solutions for a given
weight $w$. Moreover, a serious drawback of linear
scalarization is that it can not provide a solution among sunken parts
of Pareto frontier due to ``duality gap'' of nonconvex cases (see Example \ref{ex::3}). 
Instead, the sets $\{\mathcal{A}(\delta, k)\}$ computed by our
method can approximate the whole set of weakly efficient solutions in
some sense under certain conditions. The representation of
$\mathcal{A}(\delta, k)$ as the intersection of the sublevel set of a single
polynomial and the feasible set is more desirable in some
applications. For example, it can be used in optimization problems
with Pareto constraints (c.f.  \cite{HP2002}). A Pareto constraint can
be replaced by the polynomial inequality $\psi_{k}(\bx)\le \delta$
with small $\delta>0$ and large $k\in\mathbb{N}$ (see Example \ref{ex::3}).

On the other hand, Magron et al. \cite{Magron2014} studied the problem \eqref{VP} with
$m=2$. Rather than computing the weakly efficient solutions, they
presented a method to approximate as closely as desired the Pareto
curve which is the image of the objective functions over the set of
weakly efficient solutions. To this end, they also considered the
scalarizations \eqref{eq::ls} and \eqref{eq::cs}, as well as the
parametric sublevel set approximation method which is inspired by
\cite{Gorissen2012} and amounts to solving the following parametric problem
\begin{equation}\label{eq::ss}
	\min_{\bx\in\Omega}\ f_2(\bx) \quad\text{s.t. }\  f_1(\bx) \le w,
\end{equation}
with a parameter $w\in[\min_{\bx\in\Omega} f_1(\bx), \max_{\bx\in\Omega}
f_1(\bx)]$. By treating $w$ in \eqref{eq::ls}, \eqref{eq::cs} and
\eqref{eq::ss} as a parameter and employing the ``joint+marginal''
approach proposed in \cite{Lasserre2010}, they associated each
scalarization problem a
hierarchy of SDP relaxations and obtained an approximation of
the Pareto curve by solving an inverse problem (for \eqref{eq::ls} and
\eqref{eq::cs}) or by building a polynomial underestimator (for
\eqref{eq::ss}). Again, comparing with the approximate Pareto curve obtained in
\cite{Magron2014}, it is more convenient to apply our explicit
approximation $\mathcal{A}(\delta, k)$ of the weakly efficient
solution set to optimization problems with Pareto constraint. 
Moreover, when using the scalarization problems \eqref{eq::ls} and
\eqref{eq::cs}, the approach in \cite{Magron2014} requires that for
almost all the values of the parameter $w$, these parametric
problems \eqref{eq::ls} and \eqref{eq::cs} have a unique global
minimizer. Namely, there should be a
one-to-one correspondence between the points on the computed Pareto
curves and the associated weakly efficient solutions in the feasible
set. Note that our method does not have such restriction when
approximating the set of weakly efficient solutions (see Example
\ref{ex::5}).

\subsection{Numerical experiments}
Here we present some numerical examples to illustrate
the behavior of the sets $\mathcal{A}(\delta, k)$ in
approximating $\Sw$ as $\delta\to 0$ and
$k\to\infty$. 
We use the software {\sf Yalmip} \cite{YALMIP} to implement the
problems \eqref{ssdp} and call the SDP solver SeDuMi \cite{Sturm99} to
solve the resulting SDP problems. 
For the examples with $m=2$, to show how close the sets
$\mathcal{A}(\delta, k)$ in approximating $\Sw$, we
illustrate the corresponding images of $f(\Omega)$ and
$f(\mathcal{A}(\delta, k))$. To
this end, we choose a square containing $\Omega$. For each point
$u$ on a uniform discrete grid inside the square, we check if
$u\in\Omega$ (resp., $u\in\mathcal{A}(\delta, k)$).  If so,
we have $(f_1(u),f_2(u))\in f(\Omega)$ (resp.,
$(f_1(u),f_2(u))\in f(\mathcal{A}(\delta, k))$) and we plot the point
$(f_1(u),f_2(u))$ in grey (resp., in red) in the image plane. 

\begin{example}\label{ex::1}
	{\rm Consider the problem
\[	
\left\{\begin{aligned}
{\rm Min}_{\mathbb{R}^3_+} &\ \left(x_1,\ x_2,\ x_1^2+x_2^2\right)\\
\text{s.t.}\quad &\ \bx\in\Omega_1:=\{\bx\in\mathbb{R}^2 : x_1^2 +
x_2^2 \le 1\}.
\end{aligned}
\right.
\]
Clearly, the set of all weakly efficient solution to this problem is
\[\mathcal{S}_w = \left\{\bx \in \R^2 \colon x_1 \leq 0, x_2 \leq 0, x^2_1 + x^2_2 \leq 1 \right\}.
\]
For any $\delta>0$, by considering the four quadrants of $\mathbb{R}^2$
one by one, it is easy to check by definition that the set
$\Sw^{\delta}$ consists of the following four sets
\[
	\begin{aligned}
		&\{\bx\in\mathbb{R}^2 \colon x_1\ge\delta,\ x_2\ge\delta,\
	x_1^2+x_2^2\le \delta\},	\\
		&\{\bx\in\mathbb{R}^2 \colon x_1\le\delta,\ x_2\ge\delta,\
	x_2^2+2\delta x_1-\delta-\delta^2\le 0,\ x_1^2+x_2^2\le 1\},	\\
		&\{\bx\in\mathbb{R}^2 \colon x_1\le\delta,\ x_2\le\delta,\
	x_1^2+x_2^2\le 1\},	\\
		&\{\bx\in\mathbb{R}^2 \colon x_1\ge\delta,\ x_2\le\delta,\
	x_1^2+2\delta x_2-\delta-\delta^2\le 0,\ x_1^2+x_2^2\le 1\}.	\\
	\end{aligned}
\]
\begin{figure}
	\centering
	\subfigure[$\Sw^{\delta}$]{
\scalebox{0.4}{
	\includegraphics[trim=80 200 80 200,clip]{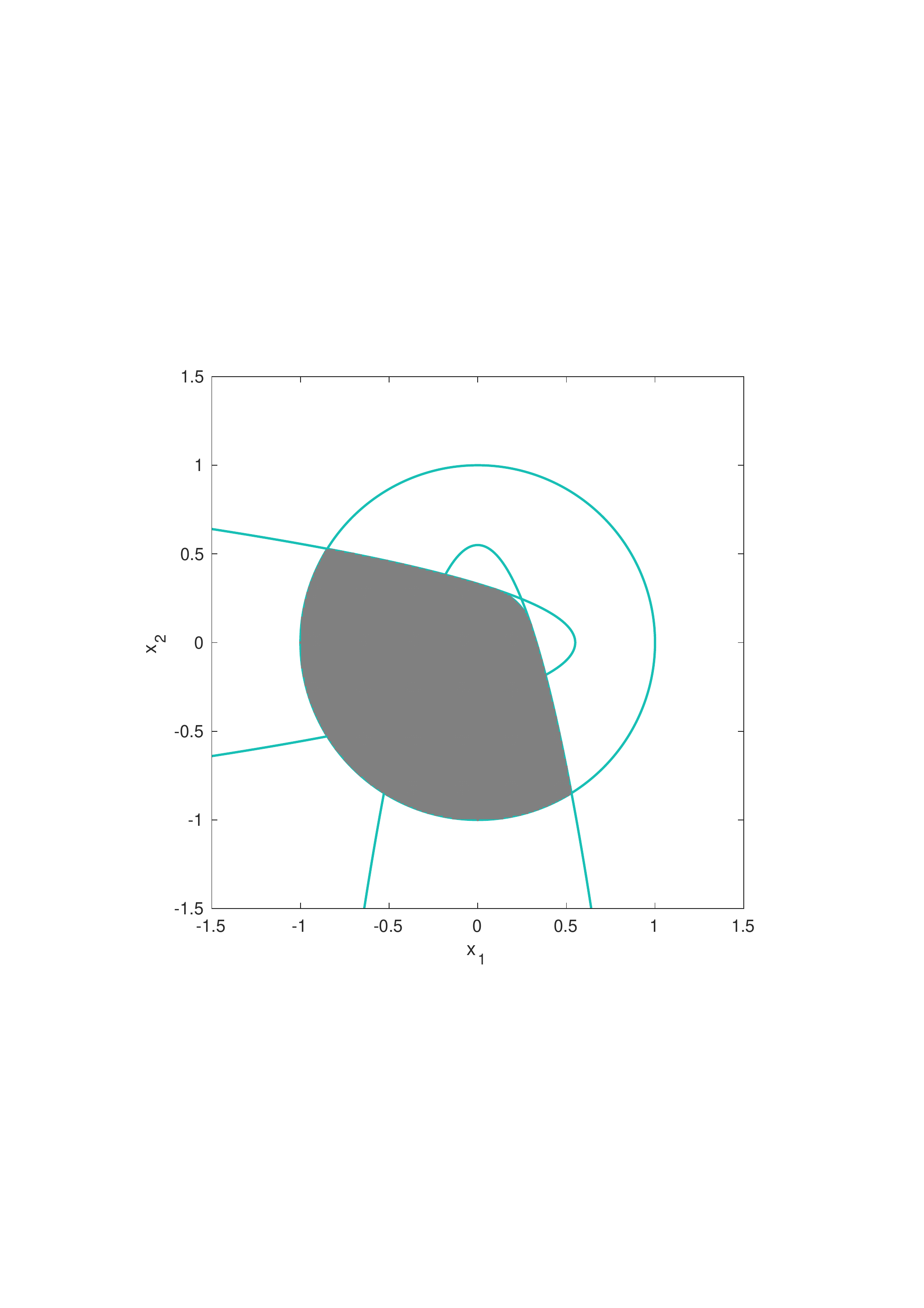}
}
}
	\subfigure[$\mathcal{A}(\delta, 2)$]{
\scalebox{0.4}{
	\includegraphics[trim=80 200 80 200,clip]{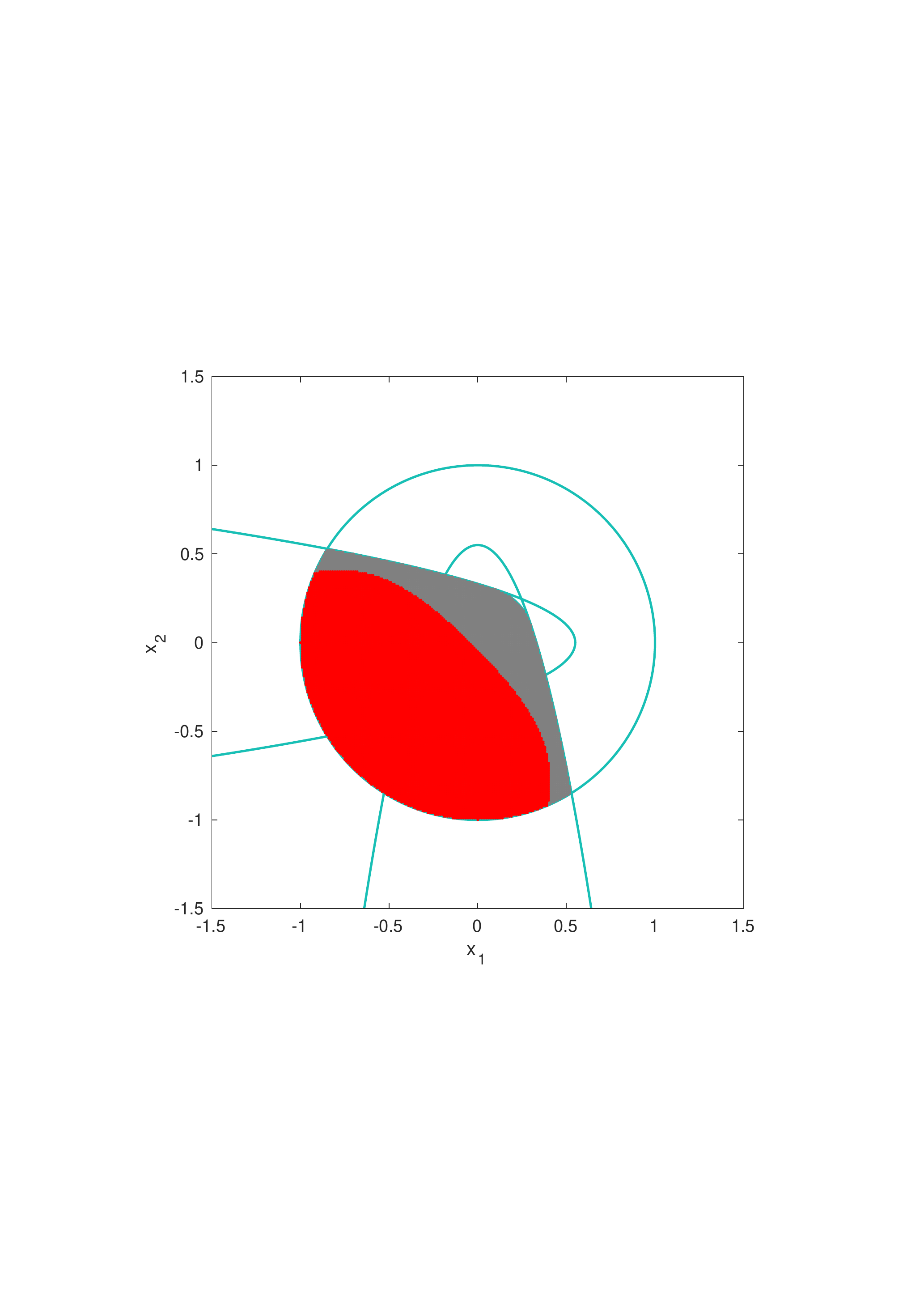}
}
}\\
	\subfigure[$\mathcal{A}(\delta, 3)$]{
\scalebox{0.4}{
	\includegraphics[trim=80 200 80 200,clip]{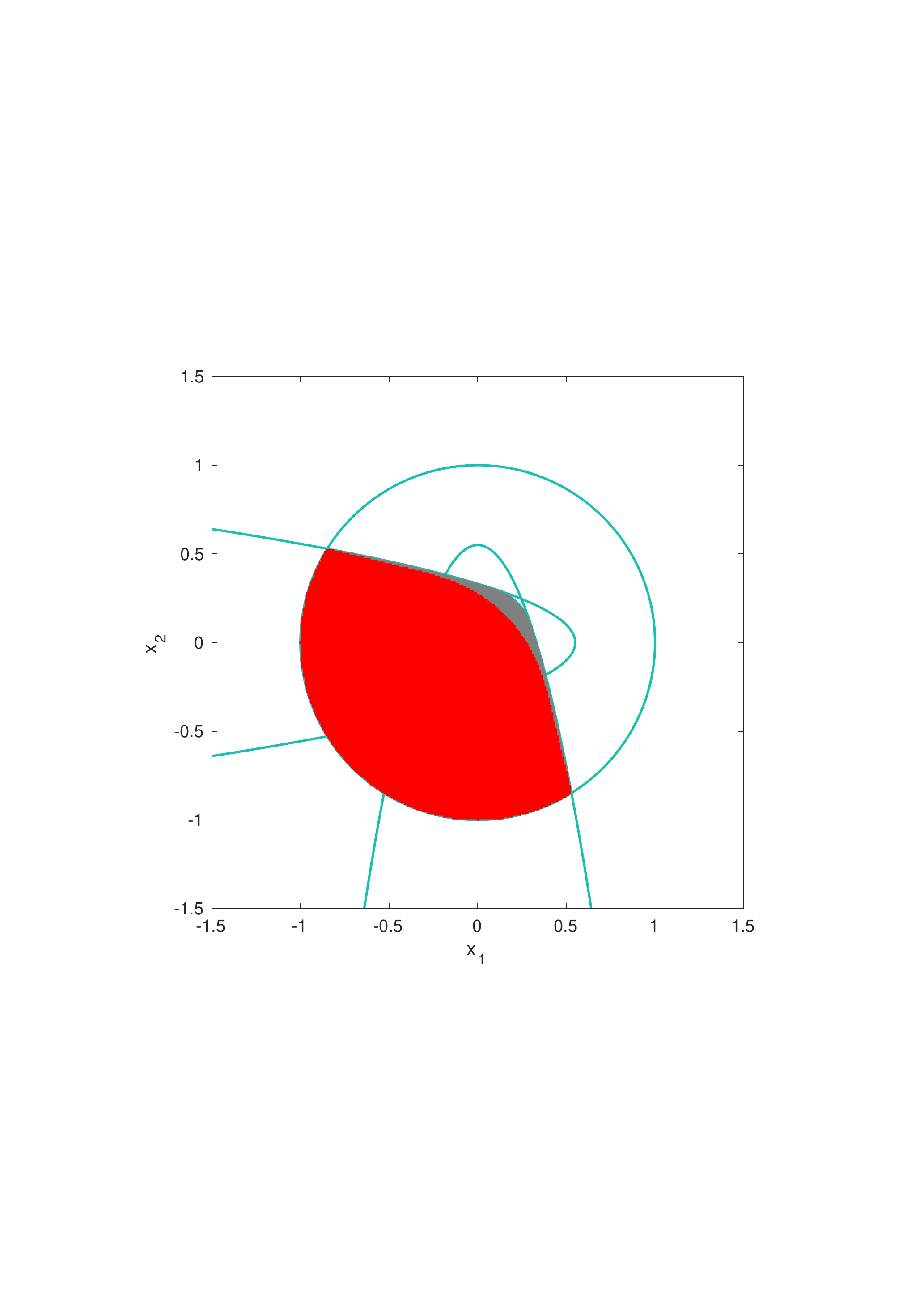}
}
}
	\subfigure[$\mathcal{A}(\delta, 4)$]{
\scalebox{0.4}{
	\includegraphics[trim=80 200 80 200,clip]{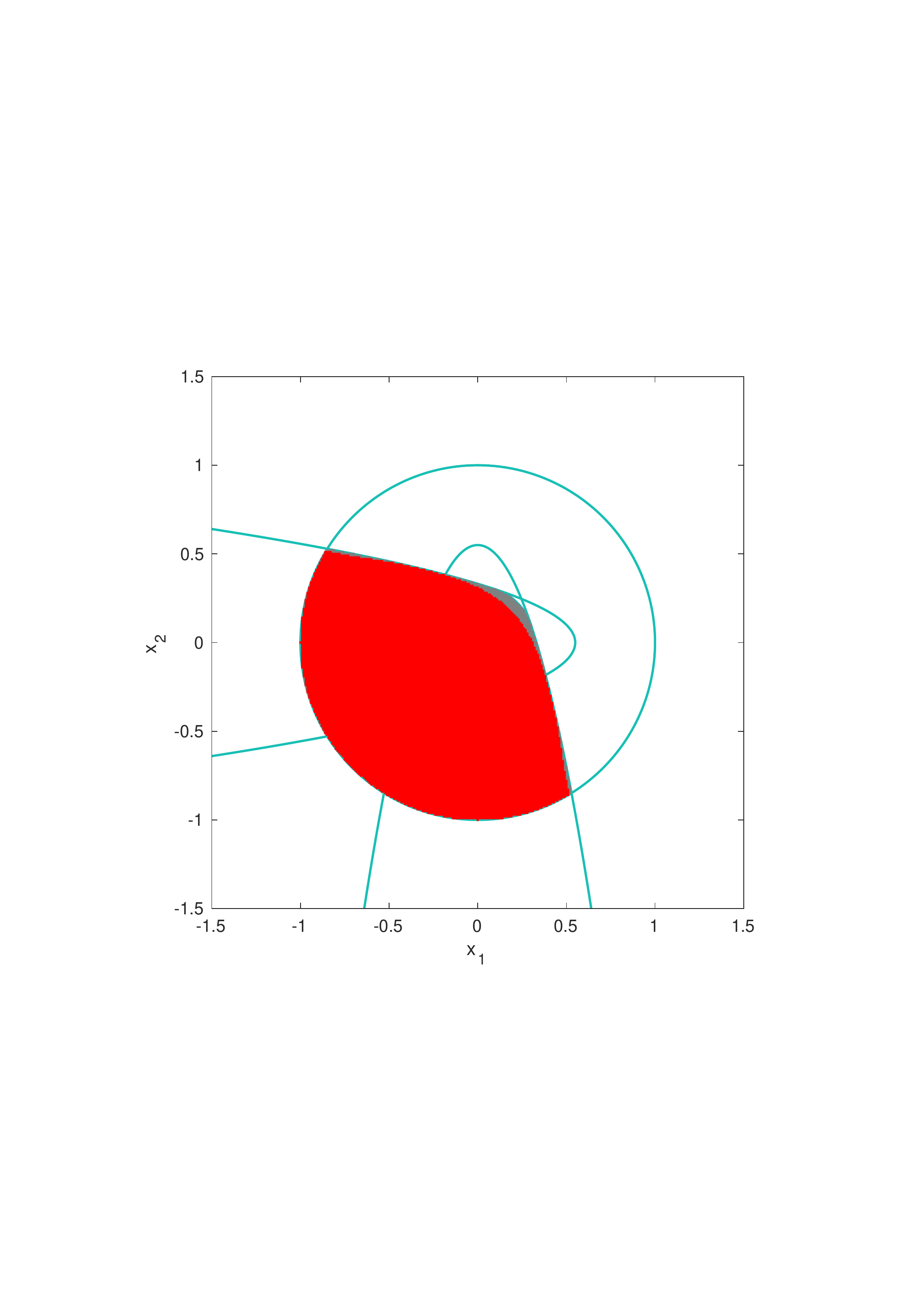}
}
}
\caption{The set $\Sw^{\delta}$ and its approximations
$\mathcal{A}(\delta, k)$ with $\delta=0.1$, $k=2, 3, 4$, in Example \ref{ex::1}.}
\label{fig-1}
\end{figure}
For $\delta=0.1$, we show the set $\Sw^{\delta}$ and its
approximations $\mathcal{A}(\delta, k)$, $k=2, 3, 4$, in Figure
\ref{fig-1}.\qed
}
\end{example}

\begin{example}\label{ex::2}{\rm
To illustrate how the set $\mathcal{A}(\delta, k)$ behaves in
approximating the set of weakly efficient solutions $\Sw$ as
$\delta\to 0$ and $k\to\infty$, we consider the problem
\[	
	\left\{
\begin{aligned}
	{\rm Min}_{\mathbb{R}^2_+} &\ \left(x_1,\
	\frac{x_2^2-2x_1x_2+1}{x_2^2+1}\right)\\
	\text{s.t.}\quad &\ \bx\in\Omega_2:=\left\{\bx\in\mathbb{R}^2 \colon 1 - x_1^2 - x_2^2 \geq 0 \right\}.
\end{aligned}
\right.
\]
We plot the images $f(\mathcal{A}(\delta, k))$ with $\delta=0.1, 0.05,
0.02$ and $k=3, 4, 5$, as well as $f(\Omega_2)$, in Figure
\ref{fig-2}. \qed
\begin{figure}[h]
	\centering
	\subfigure[$f(\mathcal{A}(0.1,3))$]{
\scalebox{0.3}{
	\includegraphics[trim=60 200 80 200,clip]{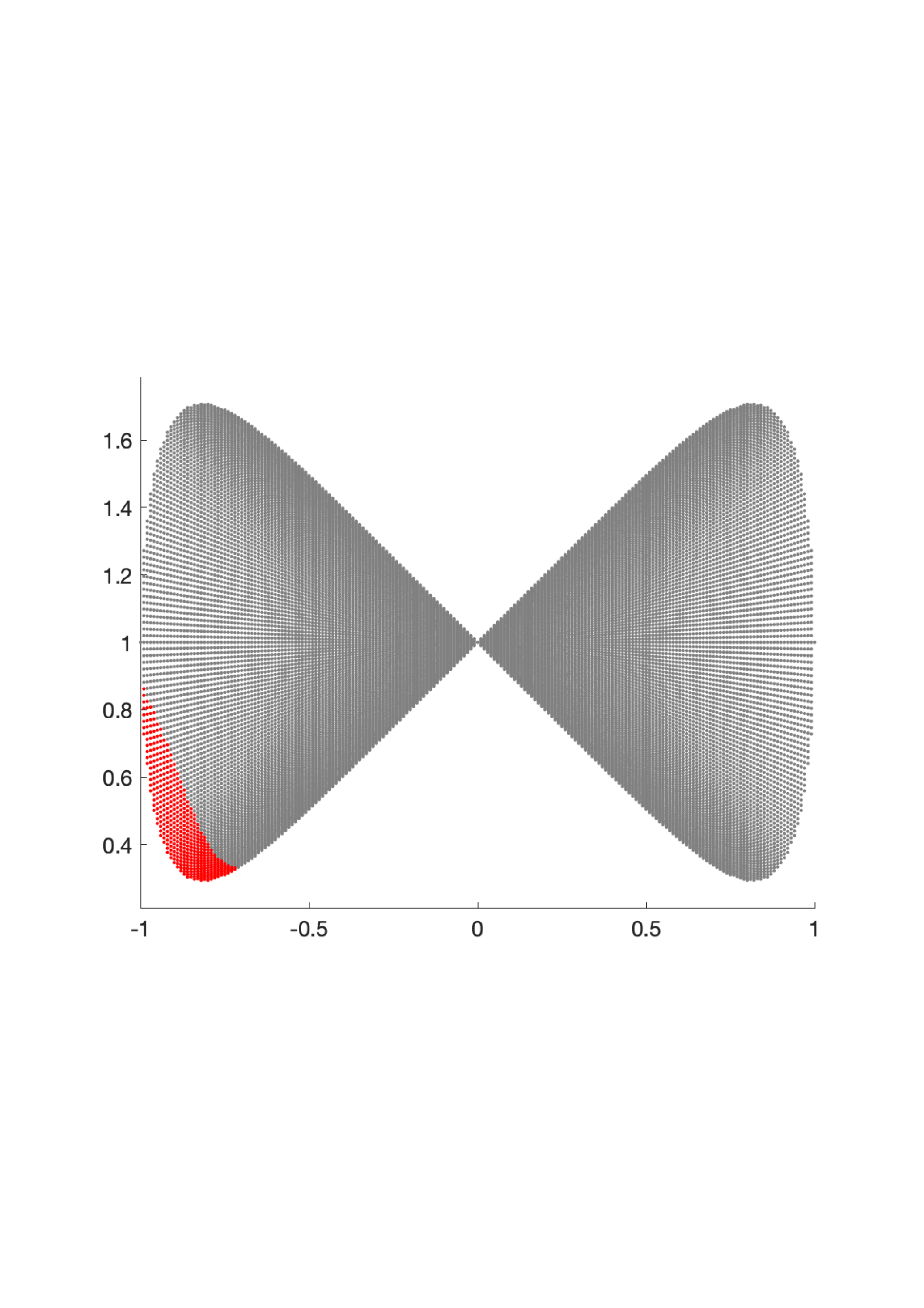}
}
}
	\subfigure[$f(\mathcal{A}(0.1,4))$]{
\scalebox{0.3}{
	\includegraphics[trim=60 200 80 200,clip]{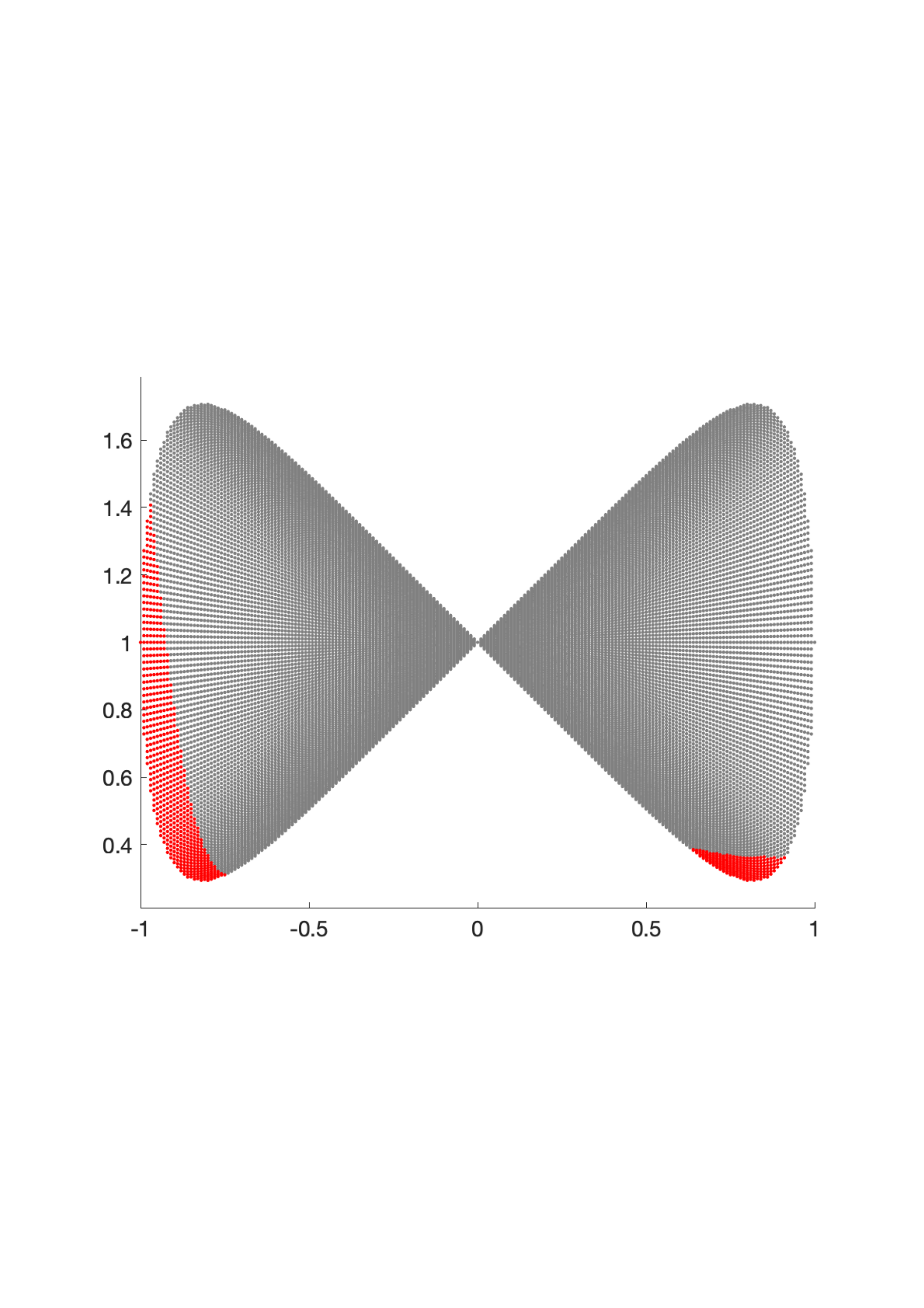}
}
}
	\subfigure[$f(\mathcal{A}(0.1,5))$]{
\scalebox{0.3}{
	\includegraphics[trim=60 200 80 200,clip]{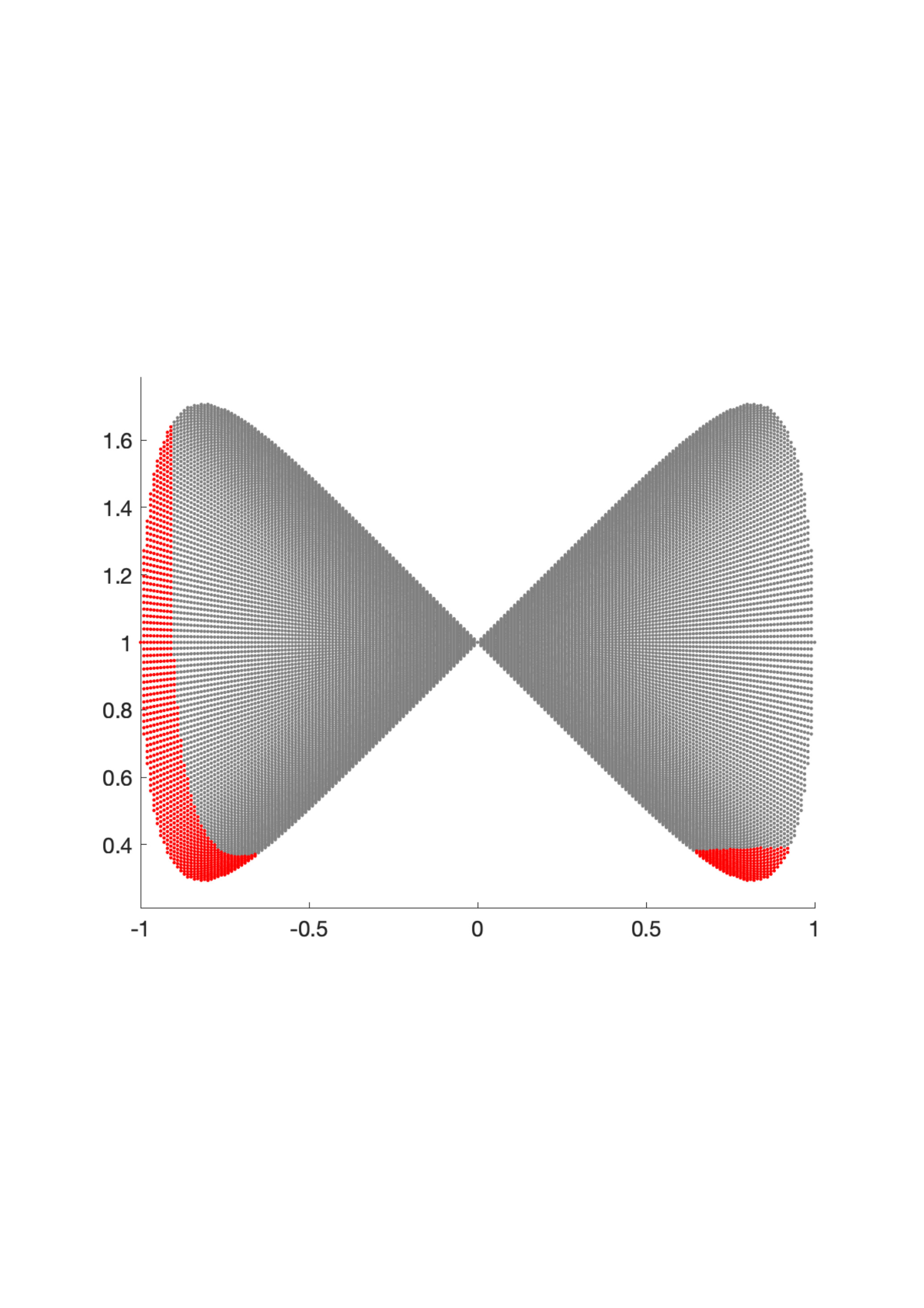}
}
}\\
\subfigure[$f(\mathcal{A}(0.05,3))$]{
\scalebox{0.3}{
	\includegraphics[trim=60 200 80 200,clip]{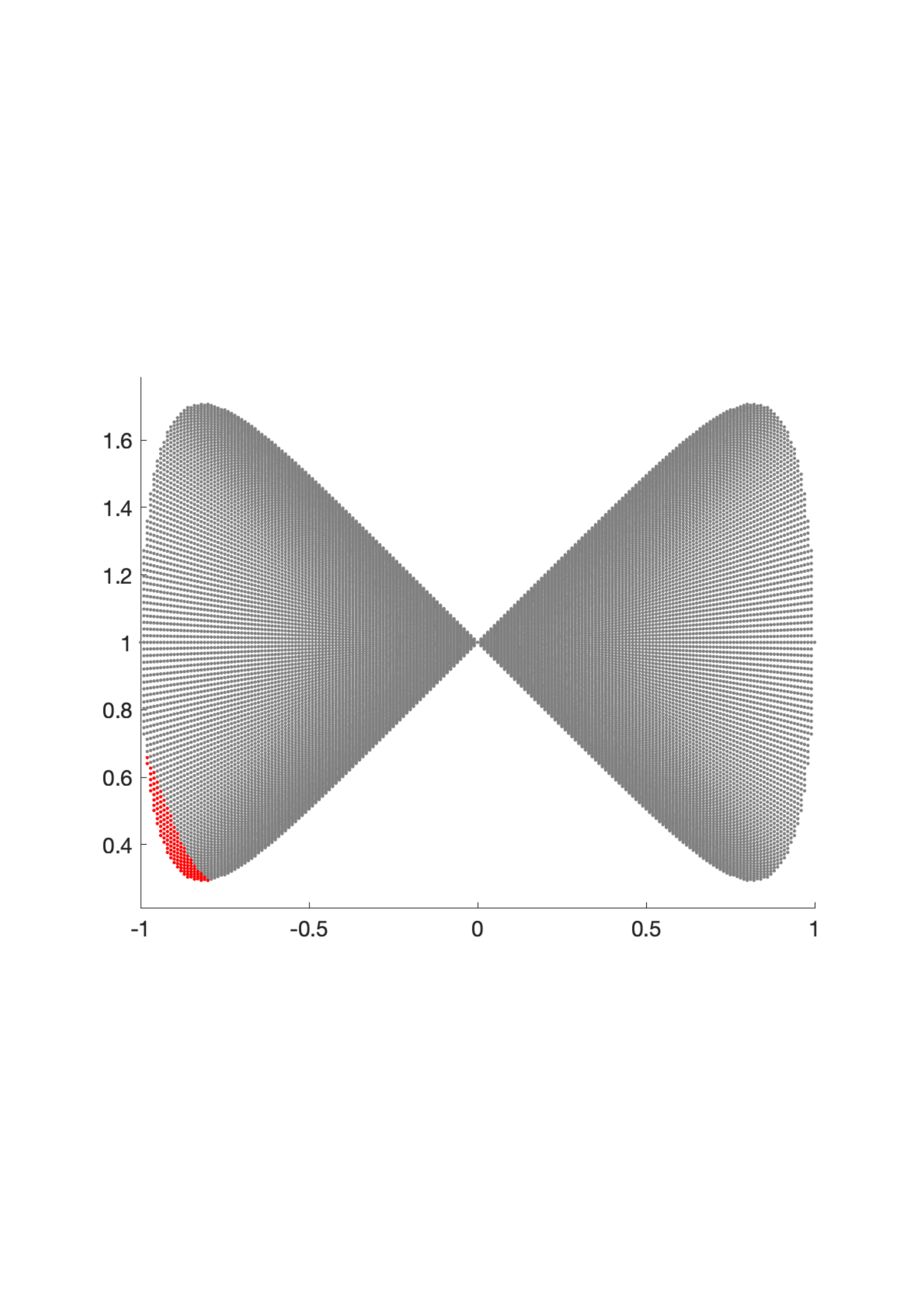}
}
}
\subfigure[$f(\mathcal{A}(0.05,4))$]{
\scalebox{0.3}{
	\includegraphics[trim=60 200 80 200,clip]{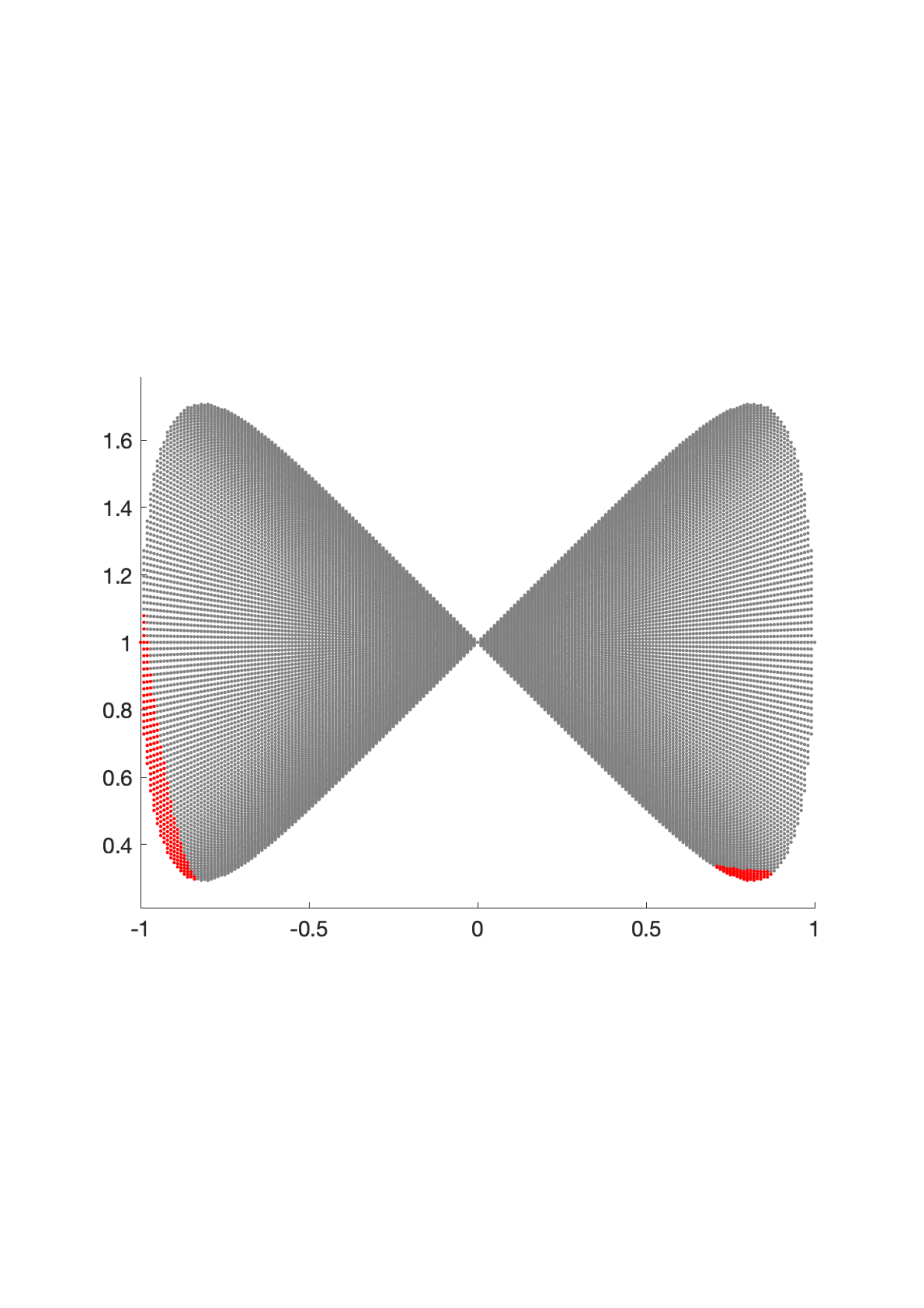}
}
}
\subfigure[$f(\mathcal{A}(0.05,5))$]{
\scalebox{0.3}{
	\includegraphics[trim=60 200 80 200,clip]{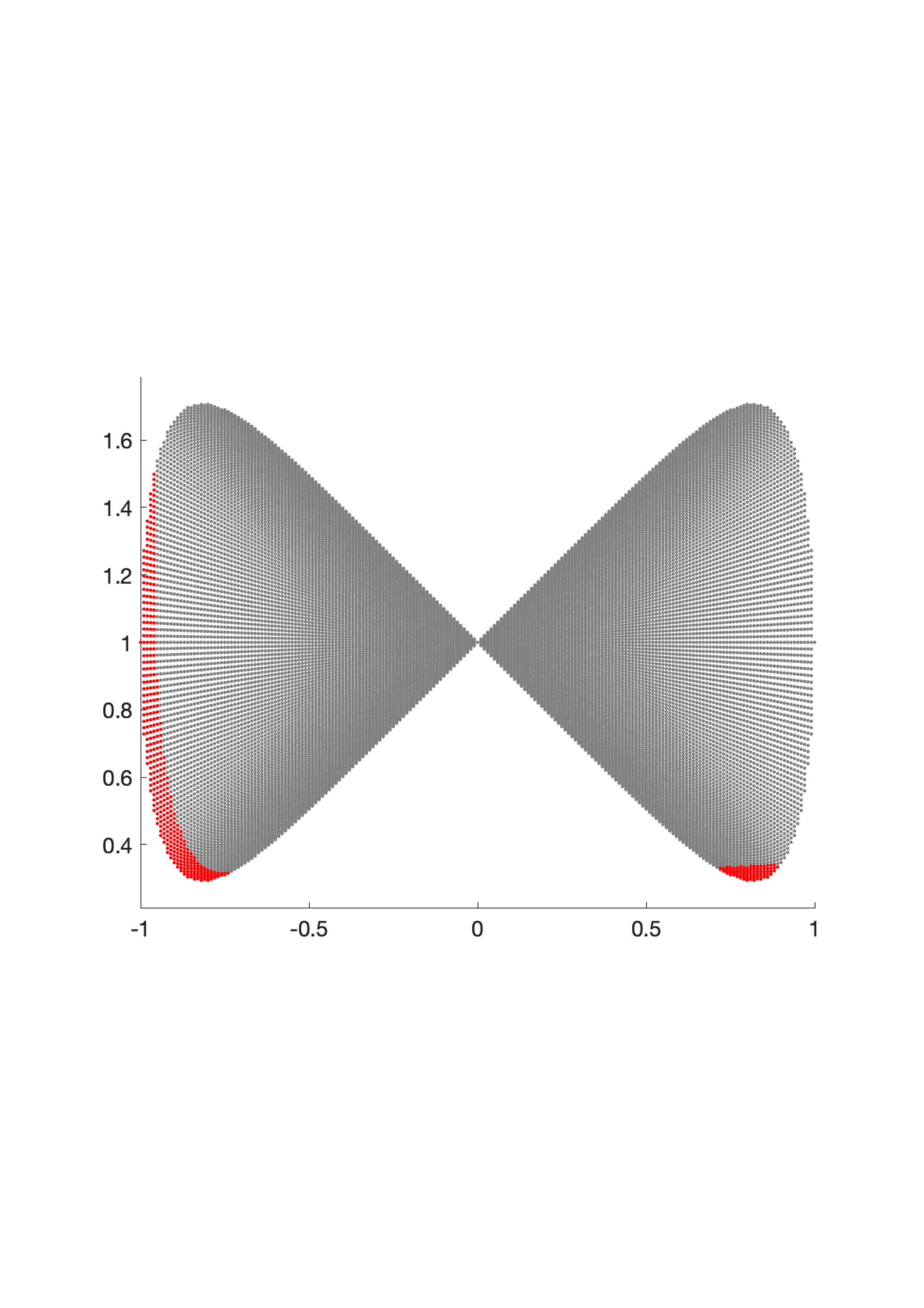}
}
}\\
\subfigure[$f(\mathcal{A}(0.02,3))$]{
\scalebox{0.3}{
	\includegraphics[trim=60 200 80 200,clip]{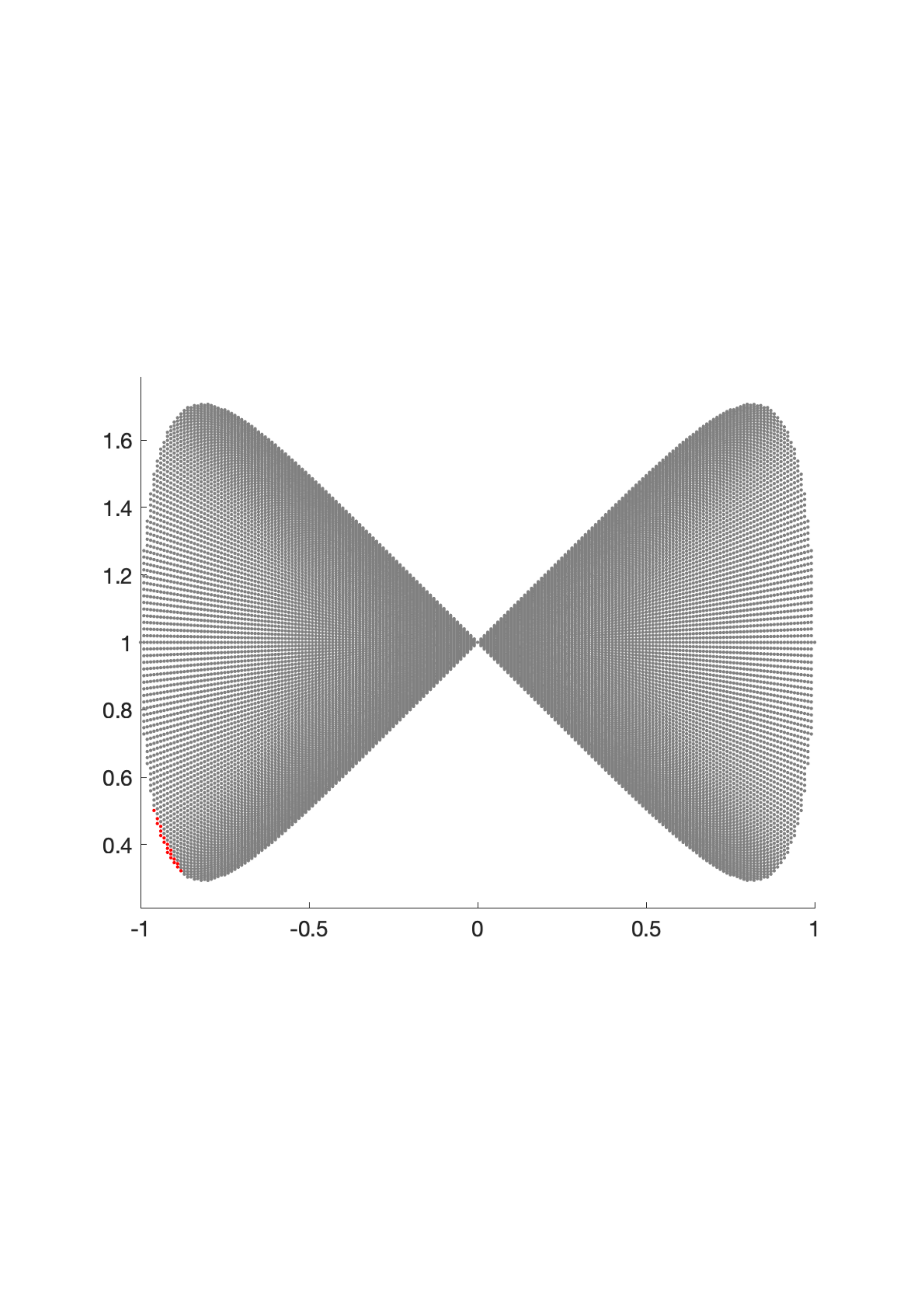}
}
}
\subfigure[$f(\mathcal{A}(0.02,4))$]{
\scalebox{0.3}{
	\includegraphics[trim=60 200 80 200,clip]{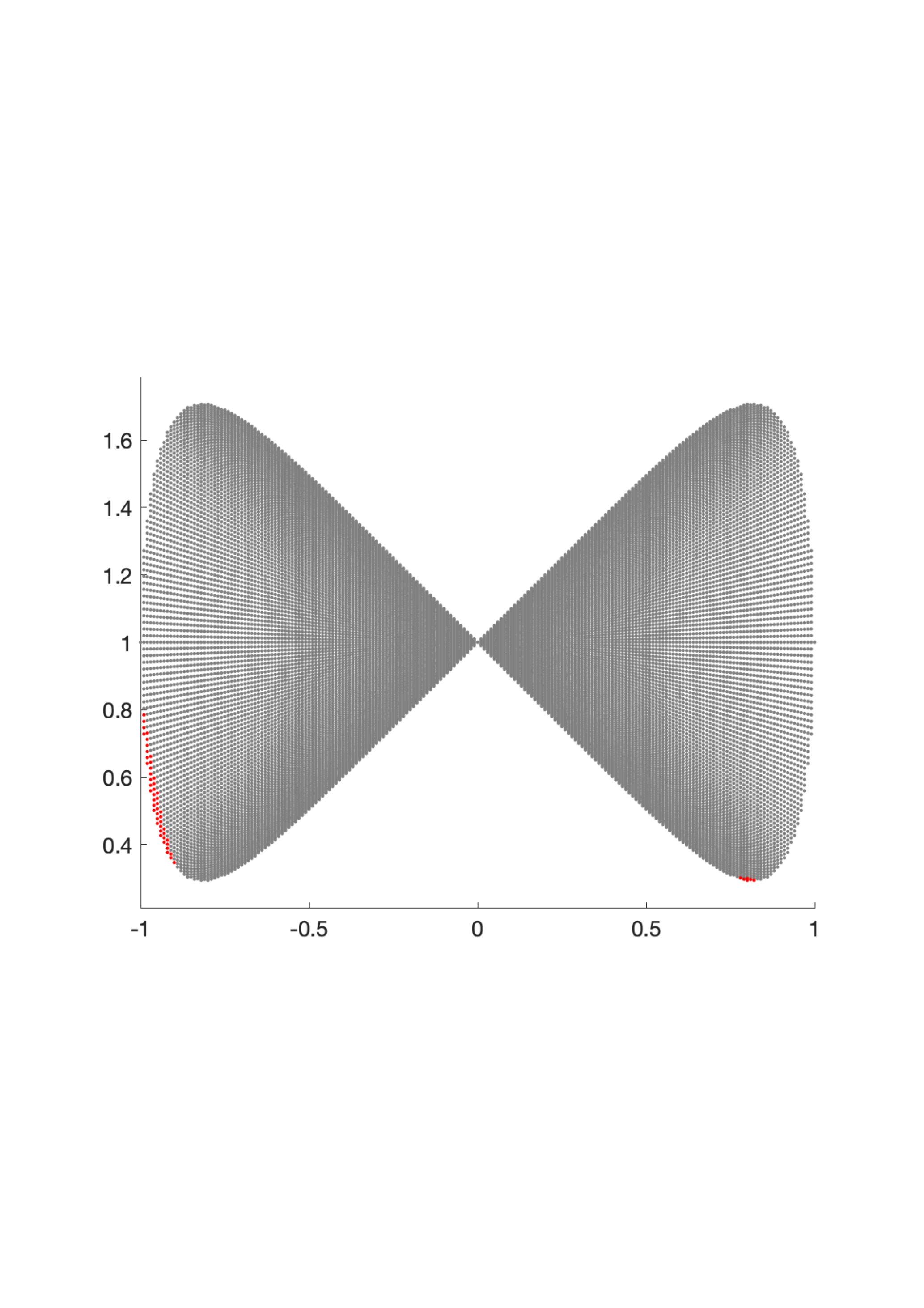}
}
}
\subfigure[$f(\mathcal{A}(0.02,5))$]{
\scalebox{0.3}{
	\includegraphics[trim=60 200 80 200,clip]{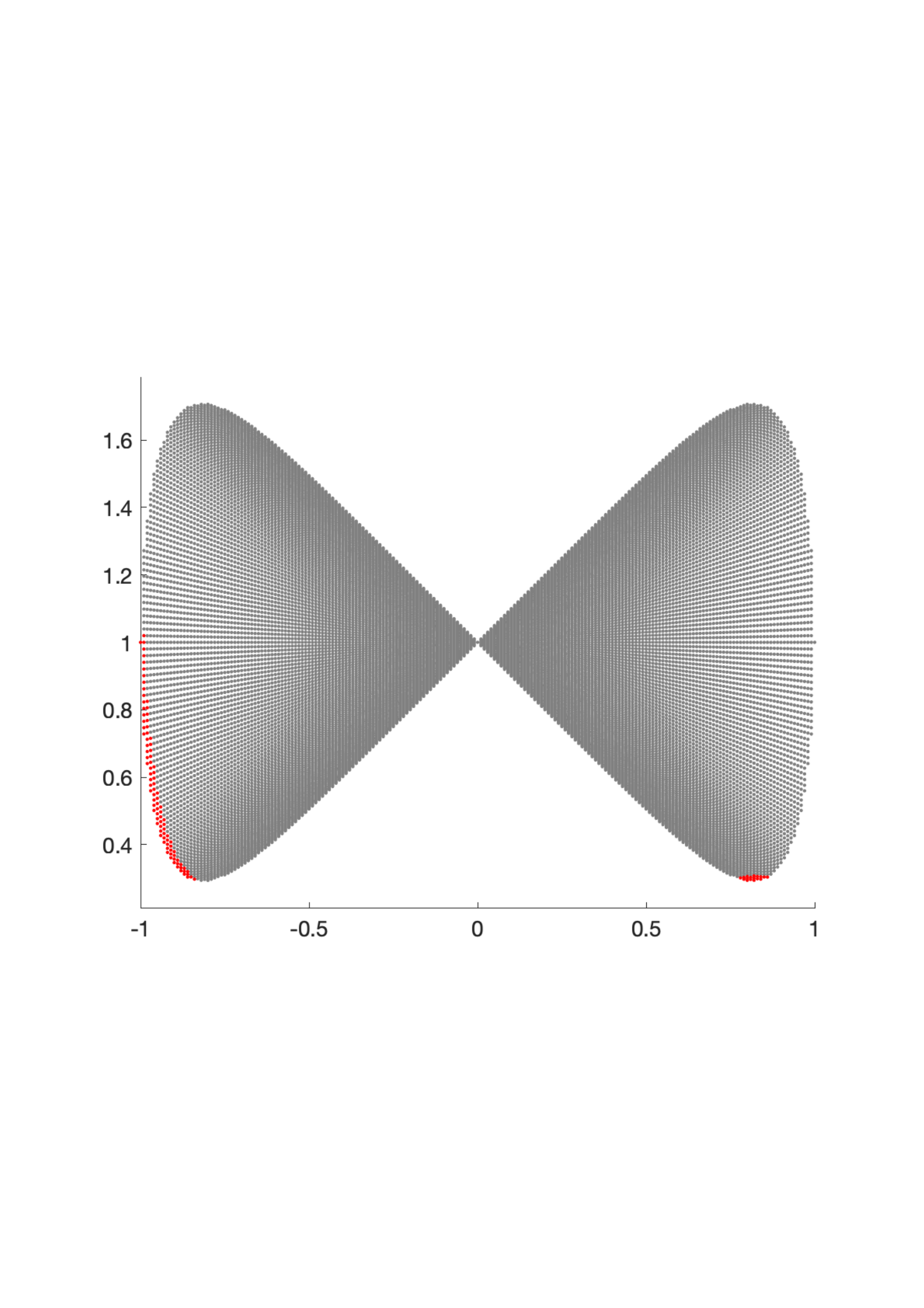}
}
}
\caption{The images $f(\mathcal{A}(\delta, k))$ (in red) and
$f(\Omega_2)$ (in gray) in Example \ref{ex::2}}
\label{fig-2}
\end{figure}
}
\end{example}

\begin{example}\label{ex::3}{\rm
Consider the problem   
\[	
	\left\{
\begin{aligned}
	{\rm Min}_{\mathbb{R}^2_+} &\ \left(\frac{\sqrt{2}}{2}(-x_1+x_2),\
	\frac{\sqrt{2}}{2}(x_1+x_2)\right)\\
	\text{s.t.}\quad &\ \bx\in\Omega_3:=\left\{\bx\in\mathbb{R}^2
	\colon  g(\bx) :=x_2^2(1-x_1^2)-(x_1^2+2x_2-1)^2 \ge 0 \right\},
\end{aligned}
\right.
\]
In fact, the equality $g(\bx)=0$ defines the so-called {\itshape
bicorn curve} as show in Figure \ref{fig-3} (a). 
Hence, the feasible set $\Omega_3$ of this problem is the
region enclosed by the bicorn curve and the image $f(\Omega_3)$ is
obtained by rotating $\Omega_3$ clockwise by $45^{\circ}$ (Figure
\ref{fig-3} (b)). It is clear that the weakly efficient
solution set $\Sw$ consists of the points on the shorter path connecting the
two singular points of the bicorn curve. As discussed in subsection
\ref{sec::compare}, the linear scalarization \eqref{eq::ls} can only
enable us to compute two points in $\Sw$, namely, the two singular
points of the bicorn curve. 
By our method, we compute the approximation $\mathcal{A}(0.01, 4)$ and
show it in Figure \ref{fig-3} (a), which is the intersection of
$\Omega_3$ and the area under the red curve defined by $\psi_4(\bx)=0.01$. 
The image $f(\mathcal{A}(0.01, 4))$ is illustrated in Figure
\ref{fig-3} (c), which shows that we can obtain good approximations of
$\Sw$ including the ones corresponding to the sunken part of Pareto curve.

Next, we consider the following optimization problem with a Pareto
constraint
\[
	\min\ x_1^2+(x_2-1)^2\quad\text{s.t.}\ (x_1,
	x_2)\in\Sw,
\]
which is to compute the square of the Euclidean distance between the
point $(0,1)$ and the curve $\Sw$. It is easy to see that the unique
minimizer of the above problem is $\left(0, \frac{1}{3}\right)$ and
the minimum is $\frac{4}{9}\approx 0.444$. With the approximation
$\mathcal{A}(0.01, 4)$ of $\Sw$, we consider the polynomial
optimization problem 
\[
	\min\ x_1^2+(x_2-1)^2\quad\text{s.t.}\ \bx\in\Omega_3,\
	\psi_4(\bx)\le 0.01. 
\]
We solve this problem by Lasserre's hierarchy of SDP relaxations (c.f.
\cite{Lasserre2001,Lasserre2009}) with the software GloptiPoly
\cite{gloptipoly}, and get the certified minimizer $(-0.0000, 0.3473)$
and minimum $0.4260$. 
\qed

\begin{figure}
	\centering
	\subfigure[]{
\scalebox{0.4}{
	\includegraphics[trim=80 200 80 200,clip]{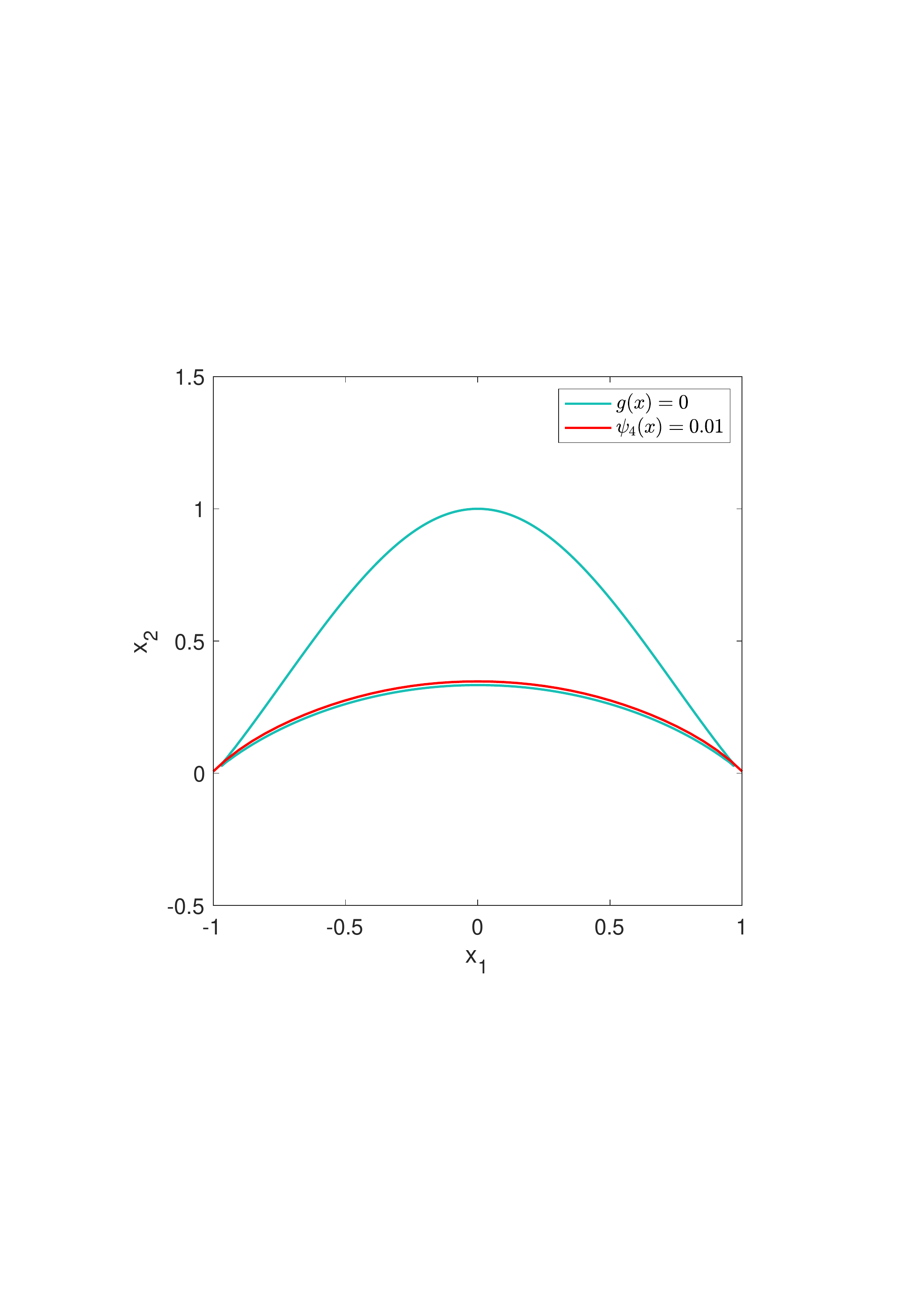}
}
}
	\subfigure[]{
\scalebox{0.4}{
	\includegraphics[trim=80 200 80 200,clip]{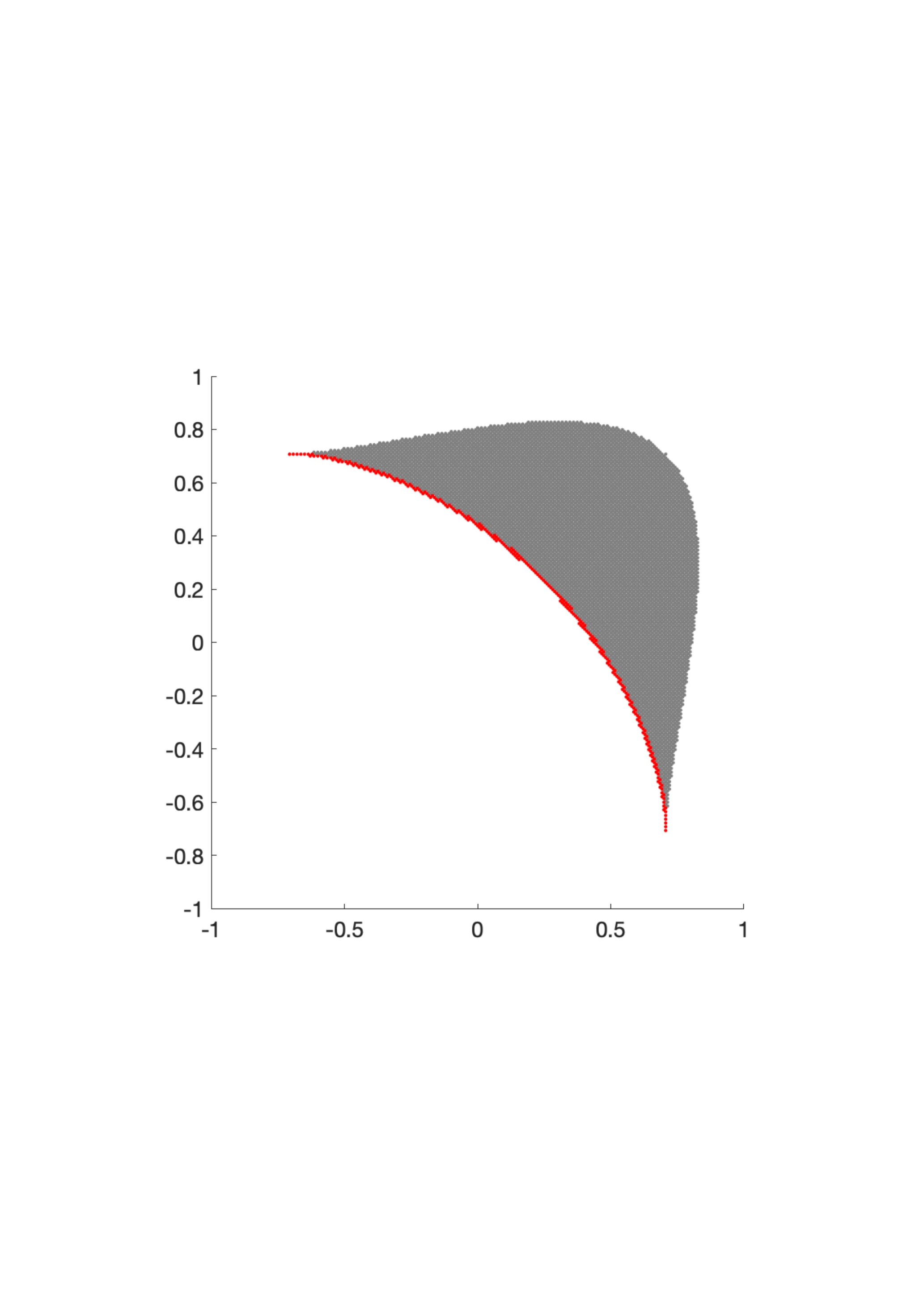}
}
}
\caption{(a) The bicorn curve and the curve defined by
	$\psi_4(\bx)=0.01$; (b) The images $f(\mathcal{A}(0.01, 4))$ (in red) and
$f(\Omega_3)$ (in gray) in Example \ref{ex::3}}
\label{fig-3}
\end{figure}
}
\end{example}

\begin{example}\label{ex::5}{\rm
Consider the problem   
\[	
	\left\{
\begin{aligned}
	{\rm Min}_{\mathbb{R}^2_+} &\ \left(-x_1^2,\ x_1^4+x_2^2\right)\\
	\text{s.t.}\quad &\ \bx\in\Omega_4:=\left\{\bx\in\mathbb{R}^2 \colon 1 - x_1^2 - x_2^2 \ge 0 \right\}.
\end{aligned}
\right.
\]
It is easy to see that the set of weakly efficient solutions $\Sw=[-1,
1]\times \{0\}$ and the image $f(\Sw)$ (the Pareto curve) is the curve
\[
	\left\{(t_1, t_2)\in\mathbb{R}^2 : t_2=t_1^2,\ t_1\in[-1, 0]\right\}
\]
in the objective plane where $t_1=-x_1^2$ and $t_2=x_1^4+x_2^2$.
Clearly, for every point $(t_1, t_2)\in f(\Sw)$, there are two weakly
efficient solutions $(-\sqrt{-t_1}, 0)$ and $(\sqrt{-t_1}, 0)$.
Therefore, this problem does not satisfy the assumptions of the
approach proposed in \cite{Magron2014} when using the scalarizations
\eqref{eq::ls} and \eqref{eq::cs}. 
By our method, we compute the set $\mathcal{A}(0.005, 5)$, which is
the intersection of the unit disk and the area enclosed by the red
curve defined by $\psi_5(\bx)=0.005$ in Figure \ref{fig-5} (a). 
The images $f(\Omega_4)$ and $f(\mathcal{A}(0.005, 5))$ is shown in
Figure \ref{fig-5} (b), which illustrates
that we can approximate the set of weakly efficient solutions as
closely as possible.
\qed

\begin{figure}[h]
	\centering
	\subfigure[]{
\scalebox{0.4}{
	\includegraphics[trim=60 200 80 200,clip]{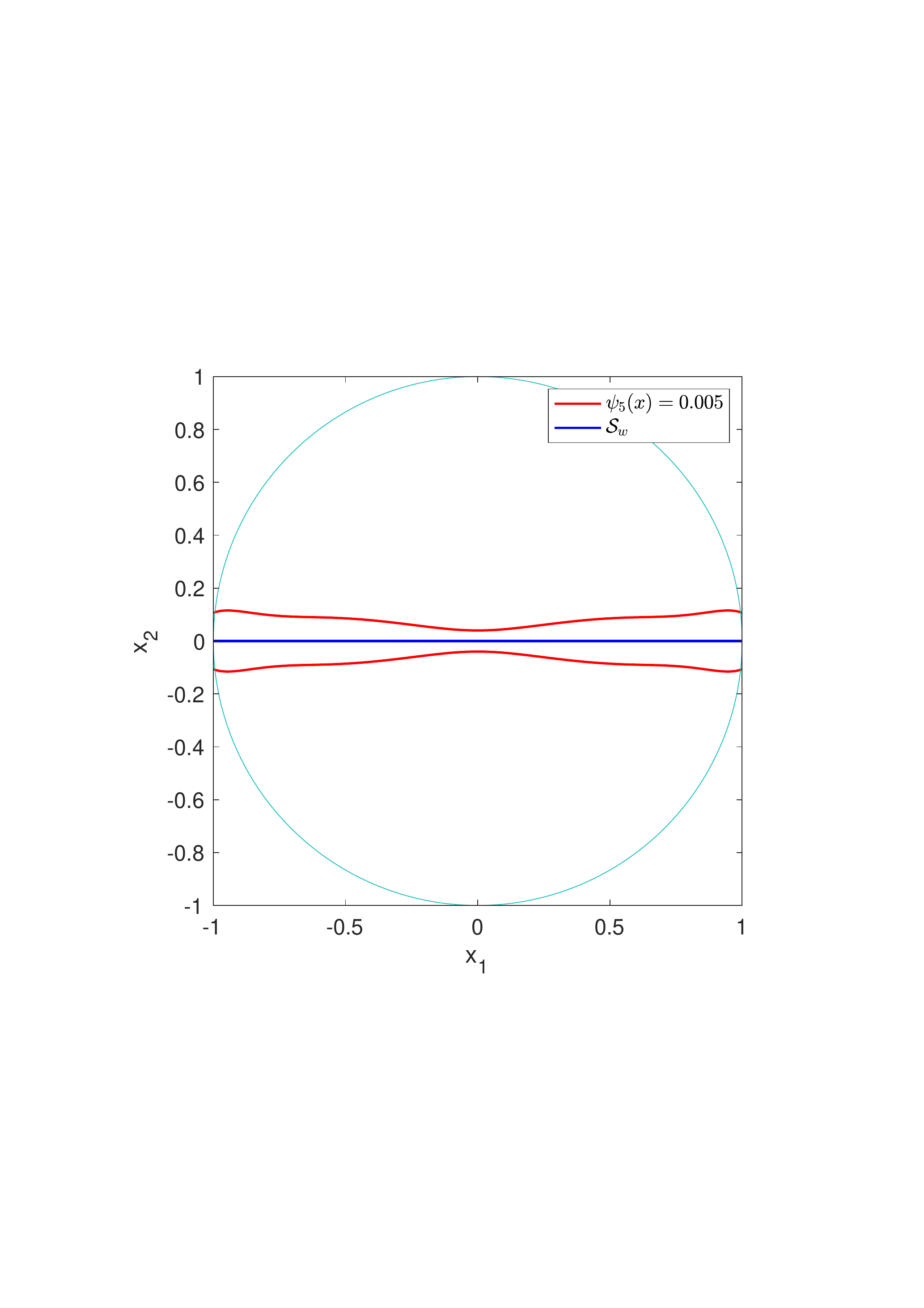}
}
}
	\subfigure[]{
\scalebox{0.4}{
	\includegraphics[trim=60 200 80 200,clip]{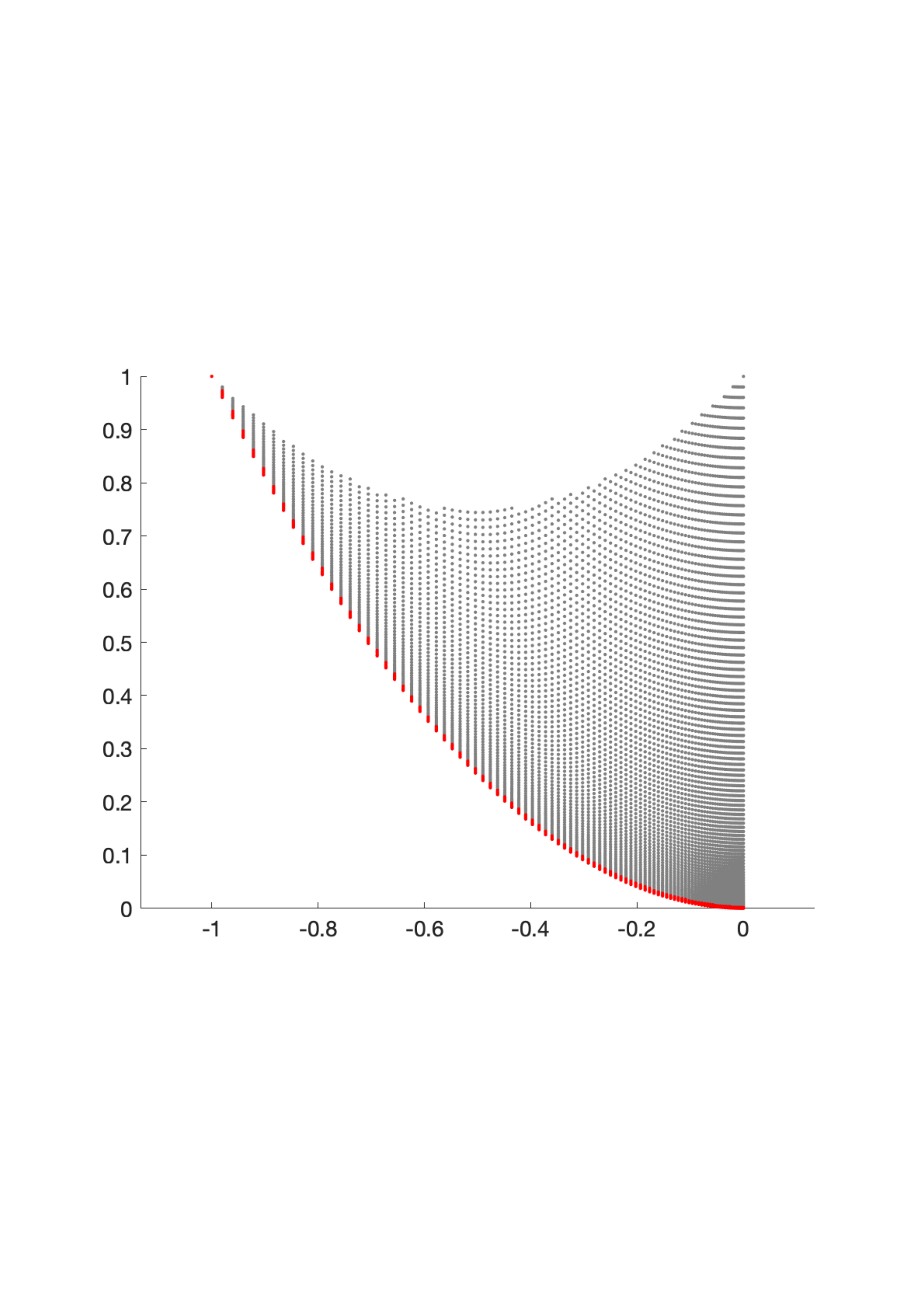}
}
}
\caption{(a) The set $\Sw$ and the curve defined by
	$\psi_5(\bx)=0.005$; (b) The images $f(\mathcal{A}(0.005, 5))$ (in red) and
$f(\Omega_4)$ (in gray) in Example \ref{ex::5}}.
\label{fig-5}
\end{figure}

}
\end{example}

\section{Conclusions}\label{sec::5}
In this paper, we provide a new scheme for approximating the set of all
weakly ($\be$-)efficient solutions to the problem~\eqref{VP}.
The procedure mainly relies on the achievement function associated
with \eqref{VP} and the ``joint+marginal'' approach proposed by
Lasserre~\cite{Lasserre2015}.
The obtained results seem new in the area of vector optimization with polynomial structures, 
in the sense that we approximate the whole set of weakly
($\be$-)efficient solutions to the problem~\eqref{VP}.
Moreover, the obtained results also significantly develop the recent
achievements in \cite{Chuong2020,LeeJiao2018,LeeJiao2021} for vector
polynomial optimization problems from convex settings to nonconvex settings.

\subsection*{Acknowledgments}
The authors thank Professor Tien-Son Pham for his helpful comments on the early version of this
manuscript.
Feng Guo was supported by the Chinese National Natural Science Foundation under grant 11571350, and the Fundamental Research Funds for the Central Universities.
Liguo Jiao was supported by Natural Science Foundation of Jilin Province (no. 20220101302JC) and the Fundamental Research Funds for the Central Universities.

\end{document}